\newtheorem{thm}{Theorem}[section]
\newtheorem{definition}[thm]{Definition}
\newtheorem{lemma}[thm]{Lemma}
\newtheorem{prop}[thm]{Proposition}
\newtheorem{cor}[thm]{Corollary}
\newtheorem{rem}[thm]{Remark}
\newcommand{\RR}{\mathcal{K}}
\definecolor{myblue}{RGB}{240,240,255}
\definecolor{myshadow}{RGB}{200,200,200}
\tikzstyle{complex}=[shape=rectangle, draw=black, fill=myblue, drop shadow=myshadow]
\tikzset{every loop/.style={min distance=10mm,in=323,out=217,looseness=6}}
\def\BState{\State\hskip-\ALG@thistlm}
\newcommand{\multiline}[1]{%
	\begin{tabularx}{\dimexpr\linewidth-\ALG@thistlm}[t]{@{}X@{}}
		#1
	\end{tabularx}
}
\author[1,2]{G\'abor Szederk\'enyi} 
\author[1]{Bernadett Ács}
\author[1]{György Lipták}
\author[1]{Mihály A. Vághy}
\affil[1]{\small P\'azm\'any P\'eter Catholic University, Faculty of Information Technology and Bionics, Práter u. 50/a,  H-1083 Budapest, Hungary} 
\affil[2]{\small Systems and Control Laboratory, ELKH Institute for Computer Science and Control (SZTAKI), Kende u. 13-17, H-1111 Budapest, Hungary}
\affil[ ]{\small e-mail: szederkenyi@itk.ppke.hu}
\title{\vspace*{2.5cm} Persistence and stability of a class of kinetic compartmental models}
\date{}
\begin{document}

\maketitle

%
%
%


\begin{abstract}
In this paper we show that the dynamics of a class of kinetic compartmental models with bounded capacities, monotone reaction rates and a strongly connected interconnection structure is persistent. The result is based on the chemical reaction network (CRN) and the corresponding Petri net representation of the system. For the persistence analysis, it is shown that all siphons in the Petri net of the studied model class can be characterized efficiently. Additionally, the existence and stability of equilibria are also analyzed building on the persistence and the theory of general compartmental systems. The obtained results can be applied in the analysis of general kinetic models based on the simple exclusion principle.
\end{abstract}

\noindent \textbf{Keywords:} dynamical models, chemical reaction networks, compartmental systems, qualitative model analysis, stability

%

%
%
%
%
\section{Introduction}
Nonnegative systems form an important subclass within dynamical systems having the property that the nonnegative orthant is invariant with respect to the dynamics. The practical motivation for developing the theory of nonnegative systems is the fact that there are several application fields such as chemistry, biology, population and disease dynamics, where in many cases the state variables of the models in the original physical coordinates are nonnegative \cite{Haddad2010}. 
Compartmental models are used to describe the change of distribution of objects (e.g., molecules or particles) 
among different storage compartments in time \cite{anderson2013compartmental}. Compartments can be physically distinct subsystems such as interconnected containers,
but they can also represent disjoint states like different stages of diseases in the case of epidemic models \cite{brauer2008compartmental}. Since the natural state variables in compartmental systems correspond to amounts of materials, numbers of molecules (or to their ratios, concentrations), these models belong to the nonnegative system class. The fundamental properties of compartmental models have been intensively studied in the literature. The observability, controllability, realizability and identifiability of compartmental systems are summarized in \cite{brown1980compartmental} focusing mainly on linear models. The analytic solution of linear compartmental ODEs is studied in \cite{garcia2012linear} in a kinetic context. A fundamental reference on the qualitative analysis of a wide class of general nonlinear compartmental models is \cite{jacquez1993qualitative}, where important results can be found on the structure of equilibria and stability.

It is known that most compartmental models can be represented in the form of kinetic systems also called chemical reaction networks (CRNs), where the dynamics can be formally realized by a set of reactions with appropriate complexes and reaction rates \cite{Erdi1989,hofmeyr2020kinetic}. Although kinetic models are originated from physical chemistry, they have been highly generalized in a mathematical sense (see, e.g. \cite{muller2012generalized,anderson2020classes,hernandez2021positive}), widening their application possibilities even to non-chemical processes as general descriptors of nonlinear dynamics. Chemical reaction network theory (CRNT) is a dynamically improving research field with strong results on the relations between the reaction graph structure and the qualitative properties of the kinetic dynamics \cite{Feinberg2019}. Persistence analysis is a problem of central importance in CRNT, for instance, it is a key property for proving global asymptotic stability of complex balanced networks \cite{Anderson2011,craciun2013persistence,Craciun2015}. 

The application of discrete structures and graph theory is an essential tool in the modeling and analysis of chemical phenomena \cite{balaban1985applications,Burch2019,anderson2021deficiency}. If the molecules and reactions in a kinetic system are tracked individually, the CRN can be considered as a discrete event system which is equivalent to a Petri net \cite{Angeli2009,pauleve2014dynamical}. A fundamental and theoretically deep result of CRNT is published in \cite{angeli2007petri}, where conditions are given for the persistence of continuous time CRNs using the graph structure of their Petri net representation and conserved quantities in the dynamics. These results were further generalized in \cite{angeli2011persistence} to time-dependent open systems, where reaction rates can be time-dependent and there are in and outflows. 

The model class we study is related to the so-called simple exclusion principle known from the theory of Markov Processes \cite{komorowski2012simple} in the sense that particles can move along a directed graph (called the compartmental graph). It is also assumed that the capacity of the compartments is bounded. Therefore, transition is only possible if there are available particles in the donor compartment, and also free space in the recipient compartment. A well-known application of the simple exclusion principle is the class of ribosome flow models (RFMs) \cite{reuveni2011genome} capturing key features of the translation process. Numerous valuable analysis results have been developed for RFMs, we can only mention a few. In \cite{margaliot2012stability} it is shown that RFMs with a tube-like structure have a unique asymptotically stable equilibrium point within the invariant domain of their dynamics. RFMs with a ring topology are studied in \cite{raveh2015ribosome} where it is shown that trajectories converge to equilibria within the compatibility classes of the state space defined by the initial conditions. The dynamics and stability of RFMs under periodic excitation is analyzed in \cite{margaliot2014entrainment}. We also mention that ODE models with essentially the same structure can be obtained by an appropriate finite volume discretization of hyperbolic partial differential equations describing the flow of material or vehicles \cite{liptak2021traffic}.

The structure of the paper is the following. In Section 2, we introduce the basic notions and known results for the ODE and Petri net representation of kinetic models. Section 3 describes the studied kinetic compartmental model class. Section 4 contains the persistence analysis results through the characterization of siphons, while the stability results are summarized in Section 5. Finally, the brief summary of the results is given in Section 6.

%
%
\section{Background and notations}
In this section, we introduce the class of kinetic systems and their representation in the form of Petri nets. Throughout the paper, we will use the following notations.

\subsection{Kinetic systems}\label{subsec:kinetic}
For the characterization of kinetic systems (also called chemical reaction networks or briefly, CRNs), we will use the notations used in \cite{Feinberg2019}, where more details can be found. A kinetic model contains $M$ species denoted by $\mathcal{X}=\{{X}_1,\dots,{X}_M\}$, and the corresponding species vector is given as $X=[{X}_1~\dots~{X}_M]^T$. Species are transformed into each other through \textit{elementary reaction steps} of the form
 \begin{align}
 C_j \rightarrow C'_j, \quad j=1,\dots,R \label{eq:Reactions}
 \end{align}
where $C_j=y_j^T {X}$ and $C'_j={y'_j}^T {X}$ are the \textit{complexes} with the \textit{stoichiometric coefficient vectors} $y_j,y'_j\in\overline{\mathbb{Z}}_{+}^M$ for $j=1,\dots,R$. The transformation shown in Eq. \eqref{eq:Reactions} means that during an elementary reaction step between the \textit{reactant complex} $C_j$ and \textit{product complex} $C_j'$, $[y_j]_i$ molecules of species ${X}_i$ are consumed, and $[y'_j]_i$ molecules of ${X}_i$ are produced for $i=1,\dots,M$. The reaction \eqref{eq:Reactions} is called an \textit{input (output) reaction of species $X_i$} if $[y_j']_i>0$ ($[y_j]_i>0$). 

The directed graph containing the complexes as vertices and reactions as directed edges is called the \textit{reaction graph} of a CRN. A directed graph is \textit{strongly connected} if there exists a directed path between any pair of its vertices in both directions. A \textit{strong component} of a directed graph is a maximal strongly connected subgraph. A \textit{weakly connected component} of a directed graph is a subgraph where all vertices are connected to each other by some (not necessarily directed) path. A reaction graph is called \textit{weakly reversible} if each weakly connected component of it is a strong component. Weak reversibility is equivalent to the property that each directed edge (reaction) is a part of a directed cycle in the reaction graph.

Let $x(t)\in\overline{\mathbb{R}}^M_{+}$ denote the state vector corresponding to ${X}$ for any $t\ge 0$ (in a chemical context, $x$ is the vector of concentrations of the species in ${X}$). Then the ODEs describing the evolution of $x$ in the kinetic system containing the reactions \eqref{eq:Reactions} are given by
\begin{align}\label{eq:general_kinetic}
\dot{x}=\sum_{i=1}^R \RR_i(x)[y_i' - y_i], \quad x(0)\in\overline{\mathbb{R}}^M_+    
\end{align}
where $\RR_i:\overline{\mathbb{R}}_+^M   \longrightarrow \overline{\mathbb{R}}_{+}$ is the \textit{rate function} corresponding to reaction step $i$, determining the velocity of the transformation \cite{Feinberg2019}. For the rate functions, we assume the following for $i=1,\dots,R$:
\begin{itemize}
\item[(A1)] $\RR_i$ is differentiable,
\item[(A2)] $\dfrac{\partial \RR_i (x)}{\partial x_j}\ge 0$ if  $[y_i]_j>0$, and $\dfrac{\partial \RR_i (x)}{\partial x_j} = 0$ if $[y_i]_j=0$,
\item[(A3)] $\RR_i(x)=0$ whenever $x_j=0$ such that $j\in\text{supp}(y_i)$.
\end{itemize}
The above properties guarantee the local existence and uniqueness of the solutions as well as the invariance of the nonnegative orthant for the dynamics in Eq. \eqref{eq:general_kinetic}. 
From now on, a reaction from complex $C_i$ to complex $C_i'$ with rate function $\RR_i$ will be denoted as 
\begin{align}
C_i \autorightarrow{$\RR_i$}{} C_i' 
\end{align}
The dynamics of a kinetic system \eqref{eq:general_kinetic} is called \textit{persistent} if no trajectory that starts in the positive orthant has an omega-limit point on the boundary of $\mathbb{R}^M_{+}$.

A set of nonlinear ODEs given as $\dot{x}=f(x)$
is called \textit{kinetic} if it can be written in the form \eqref{eq:general_kinetic} with appropriate rate functions $\mathcal{K}_i$. We remark that the representation $\eqref{eq:general_kinetic}$ of a kinetic ODE is generally non-unique even if the rate functions are polynomial, and assumed to be fixed \cite{Acs2016}. 

An important special case in the theory of CRNs is \textit{mass action kinetics} when the rate function is given in the following monomial form
\begin{equation}\label{eq:MassAction}
\RR_i(x)=k_i \prod_{j=1}^M x_j^{[y_i]_j},~~i=1,\dots,R
\end{equation}
where $k_i>0$ for $i=1,\dots,R$ are the \textit{reaction rate coefficients}.

A \textit{positive linear conserved quantity} (or positive linear first integral) for a CRN is defined as $c^T x$ for which $c^T \dot{x}(t)=0$ for $t\ge 0$, where $c\in\overline{\mathbb{R}}^M_{+}$ and $c\ne 0$. We say that a set of species $\{X_{i_1},\dots,X_{i_k} \}\subseteq\mathcal{X}$ \textit{defines a positive linear conserved quantity} if there exists $c\in\overline{\mathbb{R}}^k_{+}$ for which $\sum_{j=1}^k c_j \dot{x}_{i_j}(t)=0$ for $t\ge 0$.

\subsubsection*{Example 1}
Consider the following CRN given by $\mathcal{X}=\{X_1,\dots,X_6 \}$ and the reactions
\begin{align}
\begin{split}\label{eq:Ex1_react}
R_1: & ~~X_1 + X_5 \autorightarrow{$\RR_1$}{} X_2 + X_4  \\
R_2: & ~~X_2 + X_6  \autorightarrow{$\RR_2$}{} X_3 + X_5 \\
R_3: & ~~X_3 + X_4 \autorightarrow{$\RR_3$}{} X_1 + X_6
\end{split}
\end{align}
Furthermore, assume that the reaction rates obey the mass action kinetics described in Eq. \eqref{eq:MassAction}, i.e.
\begin{align}
\RR_1(x)=k_1 x_1 x_2,~\RR_2(x)=k_2 x_2 x_6,~\RR_3(x)=k_3 x_3 x_4
\end{align}
Then according to Eq. \eqref{eq:general_kinetic}, the ODEs of the system can be written as
\begin{align}
\begin{split}\label{eq:Ex1_ODE}
\dot{x}_1 & = -k_1 x_1 x_5 + k_3 x_3 x_4 \\
\dot{x}_2 & = k_1 x_1 x_5 - k_2 x_2 x_6 \\
\dot{x}_3 & = k_2 x_2 x_6 - k_3 x_3 x_4 \\
\dot{x}_4 & = k_1 x_1 x_5 - k_3 x_3 x_4 \\
\dot{x}_5 & = -k_1 x_1 x_5 + k_2 x_2 x_6 \\
\dot{x}_6 & = -k_2 x_2 x_6 + k_3 x_3 x_4\\
\end{split}
\end{align}
It can be checked from \eqref{eq:Ex1_ODE} that $\sum_{i=1}^6 x_i$ is a positive linear conserved quantity for the kinetic system \eqref{eq:Ex1_react}.
\subsection{Petri net representation of CRNs and persistence conditions}\label{subsec:PetriNet}
If we consider each molecule and reaction individually, CRNs can be described in the framework of discrete event systems \cite{pauleve2014dynamical}, and modeled e.g., by Petri nets \cite{angeli2007petri}. Moreover, certain properties of the corresponding Petri net have fundamental consequences on the continuous dynamics of the studied CRN. 

A \textit{Petri net} is a directed bipartite graph $G=(V,E)$, where $V=\{v_1,v_2,\dots,v_n \}$ is a set of vertices and $E=\{e_1,\dots,e_k\}$ is a set of directed edges, i.e. $e_i=(v_j,v_k)$, where $v_j, v_k \in V$. The set of vertices can be partitioned into two disjoint sets, the set of places denoted by $P=\{p_1,\dots,p_{n_p} \}$, and $T=\{t_1,\dots,t_{n_t} \}$ which is the set of transitions, where $P \cup T =V$, and $P \cap T = \emptyset$. Moreover, for any $e_i=(v_j, v_k)\in E$, either $v_j\in P$ and $v_k\in T$ or vice versa.

The state of a Petri net is given by the number of \textit{tokens} assigned to places. This can be characterized by a \textit{marking} $\mu: P \longrightarrow \mathbb{N}_0$. Obviously, a marking can be given as an integer vector of size $|P|$. The places from which edges point to a transition are called the \textit{input places of the transition}, while the places to which edges run from a transition are the \textit{output places of the transition}. The input and output places of a transition $t_i$ are denoted by $\text{In}(t_i)$ and $\text{Out}(t_i)$, respectively. Analogously, we can define the \textit{input} and \textit{output transitions} of a place $p_i$ denoted by $\text{In}(p_i)$ and $\text{Out}(p_i)$, respectively. Positive integers are assigned to each directed edge through the \textit{weighting} $W:(P \times T) \cup (T\times P) \longrightarrow \mathbb{N}$. A transition $t_j$  is \textit{enabled} if there are enough tokens in each of its input places: i.e., if $\forall$ $p_i\in\text{In}(t_j)$: $\mu(p_i) \ge W(p_i,t_j)$. During the \textit{firing} of an enabled transition $t_j$, $W(p_i,t_j)$ tokens are consumed from each $p_i\in\text{In}(t_j)$, and $W(t_j,p_k)$ tokens are added to each $p_k\in\text{Out}(t_j)$. 

The dynamical behaviour of a Petri net is characterized by the sequence of transitions from an initial marking $\mu_0$. Obviously, several transitions may be enabled at the same time which can fire in any order. Therefore, the execution (simulation) of Petri nets is generally nondeterministic. 

It can be seen from the above, that Petri nets can be assigned in a straightforward way to kinetic systems. In such a modeling framework, places correspond to species, and transitions represent reactions. The input and output places of a transition correspond to the species of the reactant and the product complexes, respectively. The weights of the input and output edges of a transition are the stoichiometric coefficients of the species of the reactant and product complexes, respectively.  For each place, the number of tokens show the actual number of molecules of the corresponding species. The Petri net representation of the CRN in Example 1 is shown in Fig. \ref{fig:Petri_triangle}, where species (places) and reactions (transitions) are denoted by circles and rectangles, respectively.
%
%

\begin{figure}[htbp]
	\centering
		\resizebox{5.5cm}{!}{\input{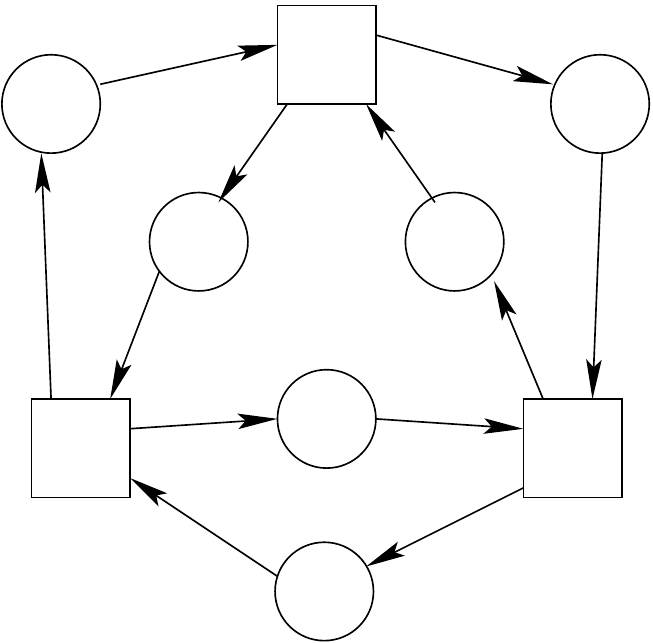_t}}
		\caption{Petri net representation of the CRN described in Example 1. All edge weights are equal to 1.}\label{fig:Petri_triangle}
	
\end{figure}

A non-empty set of places $\sigma\subset P$ is called a \textit{siphon} if each input transition associated to $\sigma$ is also an output transition associated to $\sigma$. A siphon is \textit{minimal} if it does not contain (strictly) any other siphons. Naturally, the union of siphons is a siphon, too. With some abuse of notation, a set of species $\Sigma$ in a CRN will also be called a siphon if the places associated to the species of $\Sigma$ form a siphon in the Petri net of the reaction network.

We will use fundamental result from \cite{angeli2007petri} which can be re-written as follows.
\begin{thm}[Sufficient persistence conditions from \cite{angeli2007petri}]\label{thm:persistence}
The dynamics of a CRN of the form \eqref{eq:general_kinetic} is persistent if 
\begin{enumerate}
\item[(1)] There exists a positive linear conserved quantity $c^T x$ for the dynamics, where $c\in\mathbb{R}^n_{+}$.
\item[(2)] Each siphon of the CRN contains a subset of
species which define a positive linear conserved quantity for the dynamics.
\end{enumerate}
\end{thm}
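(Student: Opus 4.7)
The plan is to argue by contradiction. Suppose the dynamics \eqref{eq:general_kinetic} is not persistent; then there exist $x_0\in\mathbb{R}^M_{+}$ and an $\omega$-limit point $x^*$ of $\phi(\cdot,x_0)$ lying on $\partial\overline{\mathbb{R}}^M_{+}$. First I would use condition (1) to ensure $\omega(x_0)\neq\emptyset$: from $c^Tx(t)\equiv c^Tx_0$ with all $c_i>0$ and $x(t)\in\overline{\mathbb{R}}^M_{+}$ one obtains the uniform bound $x_i(t)\le c^Tx_0/c_i$, so the trajectory is precompact and $\omega(x_0)$ is a nonempty, compact, flow-invariant subset of $\overline{\mathbb{R}}^M_{+}$. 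I would then set $\Sigma:=\{X_i:x^*_i=0\}$, a nonempty set of species by the boundary hypothesis, and aim to show that $\Sigma$ is a siphon in the Petri net of the CRN.

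The siphon identification is the structural core of the argument. By assumption (A3) every reaction consuming a species in $\Sigma$ has zero rate at $x^*$, so for each $X_i\in\Sigma$ the derivative $\dot x_i(x^*)$ reduces to a sum of nonnegative contributions coming from reactions producing $X_i$. The next step is to exploit the two-sided invariance of $\omega(x_0)$ under the flow: if one had $\dot x_i(x^*)>0$ for some $X_i\in\Sigma$, flowing backwards for a small time $\varepsilon>0$ would yield $\phi(-\varepsilon,x^*)\in\omega(x_0)$ with $i$th coordinate strictly negative, contradicting $\omega(x_0)\subset\overline{\mathbb{R}}^M_{+}$. Hence $\dot x_i(x^*)=0$ for every $X_i\in\Sigma$, which forces $\RR_j(x^*)=0$ for every reaction $R_j$ with $[y'_j]_i>0$ and $X_i\in\Sigma$. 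Combined with the structural content of (A2)--(A3)---namely that $\RR_j$ vanishes only because one of its reactants is absent---this gives that each such reaction must have at least one reactant in $\Sigma$, which is exactly the siphon condition.

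With $\Sigma$ identified as a siphon, condition (2) yields a subset $\Sigma'\subseteq\Sigma$ and strictly positive coefficients $c'_j$ such that $Q(x):=\sum_{X_j\in\Sigma'}c'_j x_j$ is conserved along the dynamics. Since $x_0\in\mathbb{R}^M_{+}$ one has $Q(x_0)>0$, and conservation together with continuity of $Q$ gives $Q(x^*)=Q(x_0)>0$; but $\Sigma'\subseteq\Sigma$ forces $x^*_j=0$ for every $X_j\in\Sigma'$, so $Q(x^*)=0$, the desired contradiction.

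The main obstacle is to turn the siphon identification into a rigorous argument. The backward-in-time invariance of $\omega(x_0)$ inside the nonnegative orthant, together with the precise way in which rate functions are allowed to vanish under (A1)--(A3), requires careful handling: in particular, to pass from $\RR_j(x^*)=0$ to the conclusion that some reactant of $R_j$ belongs to $\Sigma$, one typically relies on a strict-positivity property of rates when all reactants are present, which holds automatically for mass-action kinetics \eqref{eq:MassAction} but may need a slight strengthening of (A1)--(A3) in general. This is the delicate point handled in \cite{angeli2007petri}.
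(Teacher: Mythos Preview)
The paper does not supply its own proof of this theorem: it is quoted as a known result from \cite{angeli2007petri}, so there is nothing in the paper to compare your argument against. Your sketch is essentially the original argument of Angeli, De~Leenheer and Sontag, and the overall structure---bound the orbit via condition (1), pick $x^*\in\omega(x_0)\cap\partial\overline{\mathbb{R}}^M_+$, show that the zero set $\Sigma$ of $x^*$ is a siphon, then contradict condition (2)---is correct.

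Two comments on the details. First, the backward-flow step deserves slightly more care: rather than asserting that $\phi(-\varepsilon,x^*)$ exists and lies in $\omega(x_0)$, it is safer to use negative invariance of $\omega(x_0)$ in the form ``for every $T>0$ there is $w\in\omega(x_0)$ with $\phi(T,w)=x^*$''. Then $w_i\ge 0$ and the segment from $w$ to $x^*$ stays in $\overline{\mathbb{R}}^M_+$, while $\dot x_i>0$ near $x^*$ forces $x_i$ to be negative just before reaching $x^*$; this avoids any question about backward extendability of the flow outside the orthant. Second, you have correctly flagged the genuine gap: under (A1)--(A3) as stated in this paper, $\RR_j(x^*)=0$ does \emph{not} imply that some reactant of $R_j$ lies in $\Sigma$, because (A3) gives only one implication. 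The original result in \cite{angeli2007petri} assumes the converse as well (rates are strictly positive whenever all reactants are present), and that hypothesis is needed for the siphon identification to go through in general. Your honest acknowledgment of this point is appropriate.
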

The practical difficulty in applying Theorem 1 is that the number of siphons generally grows exponentially with the network size \cite{yamauchi1999time}, although there exist several computational approaches for the enumeration of all (minimal) siphons \cite{han2015calculation}.

Let us revisit Example 1 to illustrate the conditions of Theorem \ref{thm:persistence}. 
It is easy to see that condition (1) is fulfilled, since $I_0=\sum_{i=1}^6 x_i$ is a positive conserved quantity for the system as it was written in Subsection \ref{subsec:kinetic}. For condition (2), it can be checked from Fig. \ref{fig:Petri_triangle} that five minimal siphons exist in the Petri net, namely $\Sigma_1=\{X_1,X_2,X_3 \}$, $\Sigma_2=\{X_4,X_5,X_6 \}$, $\Sigma_3=\{X_1,X_4\}$, $\Sigma_4=\{X_2,X_5\}$, and $\Sigma_5=\{X_3,X_6\}$. Since $I_1=x_1+x_2+x_3$, $I_2=x_4+x_5+x_6$, $I_3=x_1+x_4$, $I_4=x_2+x_5$, and $I_5=x_3+x_6$ are also
positive linear first integrals containing the state variables of $\Sigma_1$,
$\Sigma_2$, $\Sigma_3$, $\Sigma_4$, and $\Sigma_5$, respectively, condition (2) is fulfilled, too. 

%
%
\section{The studied compartmental model class}\label{sec:CompModelClass}
In this paper, we consider a subclass of flow models equipped with a network structure. In such models, we have interconnected compartments and items (e.g., molecules, particles, or vehicles) moving between them. The compartments have finite capacities, i.e. we assume that there are well-defined upper limits for the number of items placed in the compartments at any time instant. The transition rate of items between two compartments depend on the number of particles in the source compartment and on the amount of available space in the target compartment. 

\subsection{Directed graph of the compartmental structure}
The structure of a compartmental model showing the possible directions of flows between the compartments can be described by a directed graph as follows.
\begin{definition}
The directed graph $D=(Q,A)$ called \textbf{compartmental graph}  describes the structure of the compartmental model, where the set $Q = \{q_1, \ldots , q_m \}$ of vertices correspond to the compartments, and the possible transitions are represented by directed edges of the set $A \subseteq Q \times Q$. The directed edge $a_{ij}:=(q_i,q_j) \in A$ represents the transition from the compartment $q_i$ into $q_j$. 
\end{definition}

Naturally, loop edges are not allowed in the compartmental graph, since the immediate transition from a compartment into itself does not induce any change. Furthermore, multiple identically directed edges are also not allowed between two compartments. If there exists a directed edge $(q_i,q_j)$ in the compartmental graph, then $q_i$ is called the \textit{donor} of compartment $q_j$, and $q_j$ is the \textit{recipient} of compartment $q_i$.

The directed graph of a strongly connected triangular compartmental model with $Q = \{q_1, q_2, q_3 \}$, and $A = \{(q_1, q_2), (q_2,q_3), (q_3,q_1) \} = \{a_{12}, a_{23}, a_{31} \}$ is shown in Fig. \ref{fig:compart_triangle}.
%
%

\begin{figure}[htbp]
	\centering
		\resizebox{4cm}{!}{\input{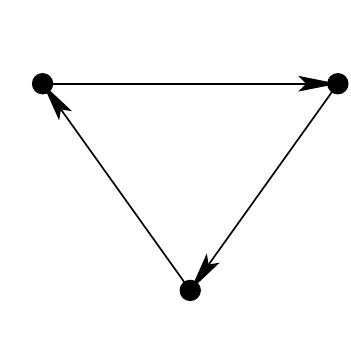_t}}
		\caption{Directed graph of a simple triangular compartmental model}\label{fig:compart_triangle}
\end{figure}


\subsection{Kinetic representation of compartmental models}
We assign a CRN to a compartmental model $D=(Q,A)$ containing $m$ compartments as follows. The set of species is $\Sigma = \mathcal{N} \cup \mathcal{S}$, where  $\mathcal{N} = \{N_1, \dots N_m\}$ and $\mathcal{S}= \{S_1, \dots S_m \}$, where $N_i$ and $S_i$ represent the amount of particles and the available space in compartment $q_i$, respectively. To each transition (directed edge) $a_{ij}$ in $D$ we assign the following reaction
\begin{align}
N_i + S_j \autorightarrow{$\RR_{ij}$}{} N_j + S_i  \label{eq:trans_react}
\end{align}
Eq. \eqref{eq:trans_react} shows that during an elementary step of the transition from compartment $q_i$  to $q_j$, the amount of content (e.g., particles, material) in compartment $q_i$ is decreased by one unit, and the number of particles in compartment $q_j$ is increased by one unit. Parallelly, the amount of free space is increased in compartment $q_i$ and decreased in compartment $q_j$. It is also visible that a necessary condition for any transition is that there is at least one particle in compartment $q_i$ and at least one available space in $q_j$. The rate (velocity) of the transition is determined by the rate function $\RR_{ij}$. 
Let us denote the continuous amount (or concentration) of particles and free space in compartment $q_i$ by $n_i$ and $s_i$, respectively. Moreover, let $\mathcal{D}_i$ and $\mathcal{R}_i$ denote the index sets of the donor and recipient compartments of $q_i$, respectively. Then, using Eq. \eqref{eq:general_kinetic}, the dynamics of $n_i$ and $s_i$ can be written as
\begin{align}
\begin{split}\label{eq:comp_n_s}
\dot{n}_i & = \sum_{j\in\mathcal{D}_i} \RR_{ji} (n_j, s_i) - \sum_{j\in\mathcal{R}_i} \RR_{ji} (n_i, s_j) \\
\dot{s}_i & = - \sum_{j\in\mathcal{D}_i} \RR_{ji} (n_j, s_i) + \sum_{j\in\mathcal{R}_i} \RR_{ji} (n_i, s_j)
\end{split}
\end{align}
It is visible from Eq. \eqref{eq:comp_n_s} that $c_i:=n_i+s_i$ is constant for any compartment $q_i$, therefore, $c_i$ will be called the \textit{capacity} of $q_i$ for $i=1,\dots,m$.
From the CRN defined by the species and reactions in Eq. \eqref{eq:trans_react}, we can give the Petri net representation of a compartmental model as it is described in Subsection \ref{subsec:PetriNet}.

Consider the compartmental model shown in Fig. \eqref{fig:compart_triangle}. The associated CRN model is the following
\begin{align}
\begin{split}\label{eq:CRN_triangle}
N_1 + S_2 \autorightarrow{$\bar{\RR}_{12}$}{} N_2 + S_1 \\
N_2 + S_3 \autorightarrow{$\bar{\RR}_{23}$}{} N_3 + S_2 \\
N_3 + S_1 \autorightarrow{$\bar{\RR}_{31}$}{} N_1 + S_3 
\end{split}
\end{align}
It is easy to see that the CRN \eqref{eq:CRN_triangle} is identical to \eqref{eq:Ex1_react} in Example 1 with $N_i=X_i$ and $S_i=X_{i+3}$ for $i=1,2,3$, and $\bar{\RR}_{12} = \RR_1$, $\bar{\RR}_{23} = \RR_2$, $\bar{\RR}_{31} = \RR_3$. Therefore, the corresponding Petri net is the same as the one shown in Fig. \ref{fig:Petri_triangle}. Note that both the compartmental graph and the Petri net of the model are strongly connected. However, as it is visible from the disjoint complexes of the reactions listed in Eq. \eqref{eq:CRN_triangle}, the corresponding reaction graph is not weakly reversible and therefore not strongly connected.

It is important to remark that the model class introduced in this section includes as special cases certain finite volume discretizations of hyperbolic partial differential equations applied e.g., in (traffic) flow modeling \cite{kessels2019traffic,liptak2021traffic}, and popular kinetic models for the description of simultaneous mRNA translation and competition for ribosomes \cite{raveh2015ribosome,raveh2016model}.


%
\section{Persistence analysis}

\begin{prop}
If the compartmental graph $D$ of a model is strongly connected then the corresponding Petri net $\mathcal{P}(D)$ as a directed graph is also strongly connected. 
\end{prop}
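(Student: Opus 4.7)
The plan is to exhibit a directed path in $\mathcal{P}(D)$ between any two of its vertices by translating paths in $D$ into alternating place–transition sequences and then handling the crossover between the two place types ($N$ and $S$) via individual transitions.

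The central observation is that every edge $(q_i,q_j)\in A$ produces a transition $R_{ij}$ in $\mathcal{P}(D)$ whose in-neighbours are $\{N_i,S_j\}$ and whose out-neighbours are $\{N_j,S_i\}$. Consequently, a directed path $q_{i_0}\to q_{i_1}\to\cdots\to q_{i_\ell}$ in $D$ lifts to the directed path
\[
N_{i_0}\to R_{i_0 i_1}\to N_{i_1}\to R_{i_1 i_2}\to\cdots\to R_{i_{\ell-1}i_\ell}\to N_{i_\ell}
\]
in $\mathcal{P}(D)$, while the same transitions, read in the opposite order, yield the directed path
\[
S_{i_\ell}\to R_{i_{\ell-1}i_\ell}\to S_{i_{\ell-1}}\to\cdots\to R_{i_0 i_1}\to S_{i_0}.
\]
Thus strong connectivity of $D$ already gives a directed path from $N_i$ to $N_j$ for every pair $(i,j)$, and, using the ``reverse'' $D$-path from $q_j$ to $q_i$, also a directed path from $S_i$ to $S_j$.

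What remains is to link the $N$- and $S$-sectors and to absorb the transition vertices. For any single edge $(q_i,q_j)\in A$ one has the two-step paths $N_i\to R_{ij}\to S_i$ and $S_j\to R_{ij}\to N_j$, so one can cross over between $N_i$ and $S_i$ (resp. $N_j$ and $S_j$) provided $q_i$ has an outgoing edge in $D$ (resp. $q_j$ has an incoming edge), which is automatic when $D$ is strongly connected and $m\ge 2$. Composing such a crossover with the $N\to N$ or $S\to S$ paths above produces a directed path in $\mathcal{P}(D)$ between any two places. Finally, every transition $R_{k\ell}$ has both in-edges (from $N_k$ and $S_\ell$) and out-edges (to $N_\ell$ and $S_k$), so paths starting or ending at a transition are obtained by prepending or appending a single edge to an appropriate place-to-place path.

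The only real obstacle is orientation bookkeeping: a forward edge in $D$ lifts to a forward connection in the $N$-sector but to a backward one in the $S$-sector, so the two place-to-place subcases must be argued with opposite $D$-paths. Once this is unpacked as above, the remaining cases reduce to a single-edge crossover composed with a place-to-place path, and the degenerate case $m=1$ (where $A=\varnothing$ and no Petri-net transition exists) is vacuous.
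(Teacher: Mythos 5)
Your proposal is correct and follows essentially the same route as the paper: lifting a directed path in $D$ to the $N$-sector (forward) and the $S$-sector (reversed), crossing between the two sectors via a single transition $N_i\to R_{ik}\to S_i$ or $S_i\to R_{li}\to N_i$ guaranteed by strong connectivity, and absorbing transition vertices by one extra edge. The only minor quibble is that the $m=1$ case is not strictly vacuous (the hypothesis holds but $\mathcal{P}(D)$ has two isolated places), though the paper glosses over this boundary case as well.
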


A directed graph is strongly connected if for any two vertices $v$ and $w$ there is a directed path from $v$ to $w$. 
The idea of the proof is that if there are two paths $P_1=v_1 v_2 \ldots v_k$ and  $P_2=v_{k} v_{k+1} \ldots  v_n$, then by concatenating $P_2$ after $P_1$ we get a walk from $v_1$ to $v_n$. It will be a walk, and not necessarily a path, since there might be identical vertices in the two concatenated paths.
In this case by omitting the loop created between the first and last occurrences of a vertex we can get a shorter walk between the same endpoints. By a series of such steps the repeated occurences can be eliminated, and we get a path. Consequently, if there exists a walk from $v$ to $w$, then there exists a path from $v$ to $w$ as well, and for a graph to be strongly connected, it is enough to show the existence of walks instead of paths between any two vertices.   

\begin{proof} Recall from Section \ref{sec:CompModelClass} that the Petri net corresponding to the CRN representation of a compartmental model contains two types of vertices, $N_i$ and $S_j$ representing the number of molecules in compartment $q_i$ and the empty spaces in compartment $q_j$, respectively. The transition between the compartments $q_i$ and $q_j$ also corresponds to a vertex, but this is a different, reaction type vertex called $R_{ij}$. 

If in the compartmental graph $D$ there is a transition from compartment $q_i$ to $q_j$, then in the Petri net $\mathcal{P}(D)$ there exists a corresponding reaction vertex $R_{ij}$, which is incident to directed edges from  vertices $N_i$ and $S_j$, and to vertices $N_j$ and $S_i$. It can be seen in Figure \ref{PetriStructure} that a transition in the compartmental graph induces a path in the set $\mathcal{N}$ of the Petri net in the same direction, and in the set $\mathcal{S}$ in the opposite direction.

\begin{figure}[htbp]
	\centering
		\resizebox{10cm}{!}{\input{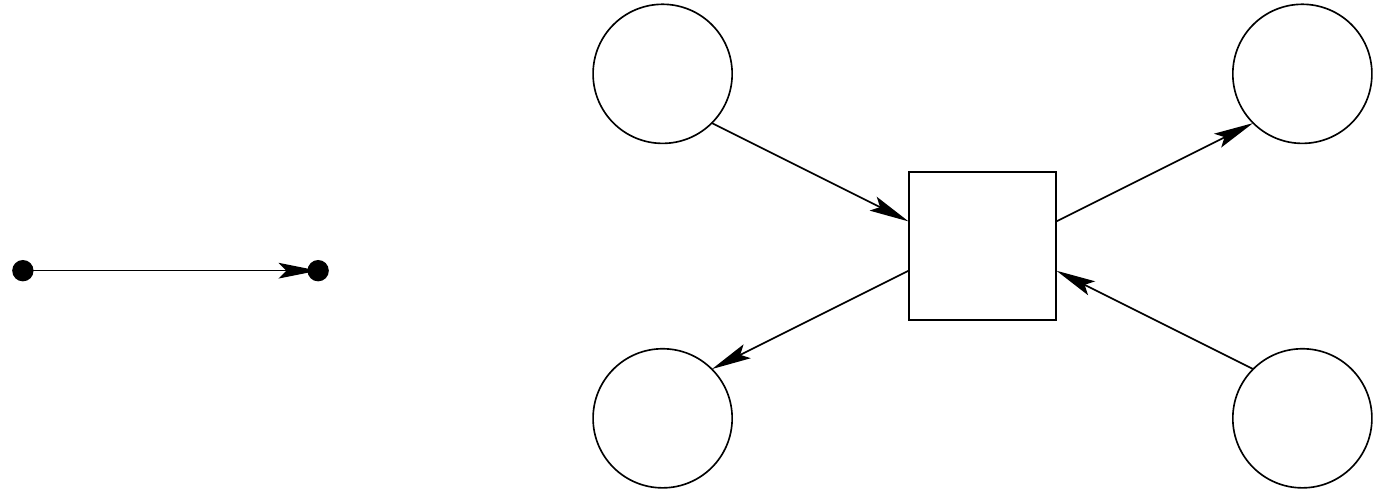_t}}
		\caption{Representations of the same transition in the compartmental graph $D$ and in the Petri net $\mathcal{P}(D)$}
	\label{PetriStructure}
\end{figure}

We will examine the existence of paths between different types of vertices of the Petri net separately, i.e. there will be four cases:
\begin{enumerate}
\item Path from $N_i$ to $N_j$\\
Since the compartmental graph $D$ is strongly connected, it contains a directed path from $q_i$ to $q_j$:
\[q_i \rightarrow q_{k_1} \rightarrow q_{k_2} \rightarrow \ldots \rightarrow q_{k_l} \rightarrow q_j\] 

This implies a directed path in the Petri net $\mathcal{P}(D)$ from $N_i$ to $N_j$ with twice the length of the path in the compartmental graph:
\[N_i \rightarrow R_{i k_1} \rightarrow N_{k_1} \rightarrow R_{k_1 k_2} \rightarrow N_{k_2} \rightarrow\ldots \rightarrow N_{k_l} \rightarrow R_{k_l j} \rightarrow N_j\] 

\item Path from $S_i$ to $S_j$\\
The existence of such a path can be proven similarly as in the previous case. Because of the strong connectivity of the compartmental graph $D$ there exists a directed path from $q_j$ to $q_i$:
\[q_j \rightarrow q_{m_1} \rightarrow q_{m_2} \rightarrow \ldots \rightarrow q_{m_p} \rightarrow q_i\] 

This implies a directed walk in the Petri net $\mathcal{P}(D)$ from $S_i$ to $S_j$:
\[ S_j \leftarrow R_{j m_1} \leftarrow S_{m_1} \leftarrow R_{m_1 m_2}  \leftarrow S_{m_2}\leftarrow \ldots  \leftarrow S_{m_p} \leftarrow R_{m_p i} \leftarrow S_i\]

\item Path from $N_i$ to $S_j$\\
The strong connectivity of the compartmental graph $D$ implies that for every vertex there is at least one directed edge starting there, consequently there must be a vertex $q_k$ to which there is a transition from $q_i$. Through the reaction vertex in the Petri net there is a path $N_i \rightarrow R_{i k} \rightarrow S_i$. By concatenating this path with the existing path from $S_i$ to $S_j$ we get a walk from $N_i$ to $S_j$.

\item Path from $S_i$ to $N_j$\\
The strong connectivity of the compartmental graph $D$ also implies that for every vertex there is at least one directed edge going there, consequently there must be a vertex $q_l$ from which there is a transition to $q_i$. It follows that in the Petri net there is a path $S_i \rightarrow R_{l i} \rightarrow N_i$. By concatenating this path with the existing path from $N_i$ to $N_j$ we get a walk from $S_i$ to $N_j$.

\end{enumerate}
It can be seen that since there exist paths connecting the type $\mathcal{N}$ and type $\mathcal{S}$ vertices, then there are paths between the reaction type vertices as well.
\end{proof}

\begin{rem}
The implication in the other direction is not true. It is possible that the Petri net is strongly connected but the corresponding compartmental graph is not. Such an example can be seen in Figure \ref{StrongPetri}.
\end{rem}

\begin{figure}[htbp]
	\centering
		\resizebox{16cm}{!}{\input{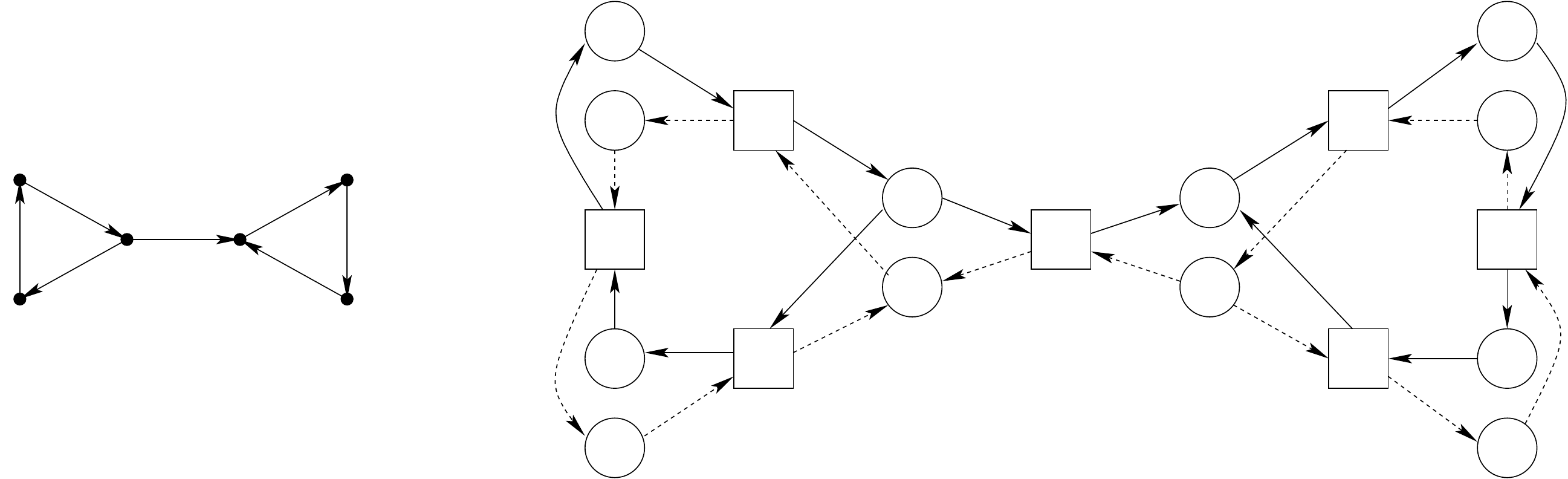_t}}
		\caption{Strongly connected Petri net $\mathcal{P}(D)$ corresponding to a not strongly connected compartmental graph $D$}
	\label{StrongPetri}
\end{figure}

In the following part of this section we will examine the structure of siphons in the Petri net. According to the definitions it is a subset of  $\Sigma=\mathcal{N}\cup \mathcal{S}$ for which every input reaction is also an output reaction.

\begin{prop} \label{siphonN}
If the compartmental graph $D$  is  strongly connected and $\mathcal{N'} \subseteq \mathcal{N}$ is a siphon in the Petri net $\mathcal{P}(D)$, then $\mathcal{N'}= \mathcal{N}$ must hold.
\end{prop}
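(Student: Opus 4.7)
The plan is to unpack the siphon condition on $\mathcal{N}'$ into a concrete graph-theoretic closure property on the compartmental graph $D$, and then use strong connectivity to force that property to saturate the whole set $\mathcal{N}$.

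First I would translate the siphon condition. Recall that for every edge $a_{ij}\in A$ the corresponding reaction vertex $R_{ij}$ has input places $\{N_i,S_j\}$ and output places $\{N_j,S_i\}$. Since $\mathcal{N}'\subseteq\mathcal{N}$ contains no $S$-type place, for a reaction $R_{ij}$ we have $\text{Out}(R_{ij})\cap\mathcal{N}'\neq\emptyset$ if and only if $N_j\in\mathcal{N}'$, and $\text{In}(R_{ij})\cap\mathcal{N}'\neq\emptyset$ if and only if $N_i\in\mathcal{N}'$. Thus the siphon property ``every input transition of $\mathcal{N}'$ is also an output transition of $\mathcal{N}'$'' simplifies to the following back-propagation rule: for every directed edge $(q_i,q_j)\in A$,
\[
N_j\in\mathcal{N}'\quad\Longrightarrow\quad N_i\in\mathcal{N}'.
\]

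Next I would exploit that a siphon is by definition non-empty, so I can pick some $N_{j_0}\in\mathcal{N}'$. Fix an arbitrary index $k\in\{1,\dots,m\}$; I want to show $N_k\in\mathcal{N}'$. Since $D$ is strongly connected, there exists a directed path from $q_k$ to $q_{j_0}$ in $D$, say
\[
q_k=q_{k_0}\to q_{k_1}\to\cdots\to q_{k_{l}}=q_{j_0}.
\]
Applying the back-propagation rule along the edges of this path in reverse order, I would get $N_{k_{l-1}}\in\mathcal{N}'$ from $N_{k_l}=N_{j_0}\in\mathcal{N}'$, then $N_{k_{l-2}}\in\mathcal{N}'$, and inductively $N_k=N_{k_0}\in\mathcal{N}'$. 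Since $k$ was arbitrary, this gives $\mathcal{N}\subseteq\mathcal{N}'$, and combined with the hypothesis $\mathcal{N}'\subseteq\mathcal{N}$ this yields $\mathcal{N}'=\mathcal{N}$.

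There is no real obstacle here; the only point requiring care is the correct reading of ``input/output transition of a set of places''—the siphon condition must be read as a constraint that back-propagates along edges of $D$ (from recipient to donor), not forward. Once that is set up correctly, strong connectivity of $D$ trivially does the rest via a single backward path for each target vertex.
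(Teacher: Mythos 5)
Your proof is correct and rests on the same key observation as the paper's: restricted to a subset of $\mathcal{N}$, the siphon condition says exactly that whenever $N_j$ is in the set, so is $N_i$ for every donor $q_i$ of $q_j$, and strong connectivity then forces the set to be all of $\mathcal{N}$. The paper phrases this as a contradiction via an edge entering $Q'=\{q_i : N_i\in\mathcal{N}'\}$ from its complement, while you propagate membership backward along a directed path, but these are the same argument in direct versus contrapositive form.
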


In other words, if a siphon of the Petri net contains vertices only from the set $\mathcal{N}$, then it contains all of them.

\begin{proof}
Let us assume by contradiction that $\mathcal{N}' $ is a siphon in the Petri net, which is a real subset of the vertex set $\mathcal{N}$. 

This set corresponds to the set $Q'=\{q_i ~|~ N_i \in \mathcal{N}'\}$, which is a real subset of the vertex set $Q$ of the compartmental graph $D$, i.e. $Q' \neq \emptyset$ and $Q \setminus Q' \neq \emptyset$ hold.
Since the compartmental graph $D$ is strongly connected, there must be vertices $q_i \in Q'$ and $q_j \in Q \setminus Q'$ so that there is a directed edge in $D$ from $q_j$ to $q_i$. 

This edge represents the transition corresponding to the reaction $R_{ji}$ in the Petri net, which connects the vertices $N_j, S_j, N_i$ and $S_i$. Since the vertex $N_i$ is in the siphon $\mathcal{N}'$, the reaction $R_{ji}$ is an input reaction of the siphon $\mathcal{N}'$, so by the definition of siphons, $R_{ji}$ must be an output reaction to $\mathcal{N}'$ as well. For this to hold $S_i$ or $N_j $ should be in the set $\mathcal{N}'$. However, $S_i \notin \mathcal{N}'$ since by its definition $\mathcal{N}'$ contains only type $\mathcal{N}$ vertices, and $N_j \notin \mathcal{N}'$ since $q_j \notin Q'$. The original assumption leads to contradiction, meaning that the siphon $\mathcal{N}'$ cannot be a real subset of $\mathcal{N}$. 

However, $\mathcal{N}$ itself is a siphon, since every reaction $R_{kl}$ is an input reaction to the vertex $N_l$ and an output reaction from the vertex $N_k$.

\end{proof}

A similar property is true for the species set $\mathcal{S}$, and the proof is based on the same idea.

\begin{prop} \label{siphonS}
If the compartmental graph $D$  is  strongly connected and $\mathcal{S'} \subseteq \mathcal{S}$ is a siphon in the Petri net $\mathcal{P}(D)$, then $\mathcal{S'}= \mathcal{S}$ must hold.
\end{prop}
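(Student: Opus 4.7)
The plan is to mirror the contradiction argument used in the proof of Proposition \ref{siphonN}, but with the roles of reactants and products suitably exchanged, since each $S$-type place appears on the product side of the reaction $R_{ij}$ corresponding to the edge $(q_i,q_j)$. I would begin by assuming for contradiction that $\mathcal{S}'$ is a siphon with $\mathcal{S}' \subsetneq \mathcal{S}$, and introducing $Q' = \{q_i \in Q : S_i \in \mathcal{S}'\}$, which is then a nonempty proper subset of $Q$.

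The next step uses strong connectivity of $D$ to locate an appropriate edge. Because $D$ is strongly connected and both $Q'$ and $Q \setminus Q'$ are nonempty, there must exist an edge $(q_a, q_b) \in A$ with $q_a \in Q'$ and $q_b \in Q \setminus Q'$; otherwise no vertex of $Q \setminus Q'$ would be reachable from $Q'$. This is the main subtlety and the only point at which the proof differs in substance from that of Proposition \ref{siphonN}: for the $\mathcal{N}$ case one looks for edges \emph{entering} $Q'$, whereas here, because $S_i$ is \emph{produced} (not consumed) by reactions associated to edges leaving $q_i$, one must look for edges \emph{leaving} $Q'$.

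The associated reaction vertex $R_{ab}$ in $\mathcal{P}(D)$ has input places $\{N_a, S_b\}$ and output places $\{N_b, S_a\}$. Since $S_a \in \mathcal{S}'$, the transition $R_{ab}$ is an input transition of $\mathcal{S}'$. The siphon property then forces $R_{ab}$ to also be an output transition of $\mathcal{S}'$, which would require $N_a \in \mathcal{S}'$ or $S_b \in \mathcal{S}'$. The inclusion $\mathcal{S}' \subseteq \mathcal{S}$ rules out the former, and $b \notin Q'$ rules out the latter, producing the desired contradiction.

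To finish, I would observe that $\mathcal{S}$ itself is in fact a siphon: every reaction $R_{kl}$ has $S_l$ among its inputs and $S_k$ among its outputs, both of which lie in $\mathcal{S}$, so each input transition of $\mathcal{S}$ is automatically an output transition. Combined with the contradiction established above, this gives $\mathcal{S}' = \mathcal{S}$.
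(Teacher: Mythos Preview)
Your proof is correct and follows exactly the approach the paper indicates, namely mirroring the argument of Proposition~\ref{siphonN} with the direction of the boundary-crossing edge reversed. Your explicit identification of why an edge \emph{leaving} $Q'$ is needed here (because $S_i$ appears as a product of $R_{ij}$ rather than a reactant) is precisely the adaptation the paper alludes to when it says ``the proof is based on the same idea.''
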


If a siphon in the Petri net contains both types of vertices, a different type of structural property can be formulated.

\begin{prop} \label{siphon_mixed}
If the compartmental graph $D$  is  strongly connected and $\mathcal{T}\subseteq \mathcal{N} \cup \mathcal{S}$ is a siphon in the corresponding Petri net $\mathcal{P}(D)$ for which $\mathcal{T}\cap \mathcal{N} \neq \emptyset$ and $\mathcal{T}\cap \mathcal{S} \neq \emptyset$, then there is an index $i \in \{1,2,\ldots , m\}$ for which $N_i \in \mathcal{T}$ and $S_i \in \mathcal{T}$.
\end{prop}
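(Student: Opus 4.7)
The plan is to argue by contradiction, translating the siphon condition on $\mathcal{T}$ into a closure property on a subset of compartments, and then invoking strong connectivity of $D$ to force that subset to be all of $Q$.

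Assume toward contradiction that no index $i$ satisfies $N_i \in \mathcal{T}$ and $S_i \in \mathcal{T}$ simultaneously. Define
\[
I = \{i : N_i \in \mathcal{T}\}, \qquad J = \{j : S_j \in \mathcal{T}\}.
\]
By the hypothesis that $\mathcal{T}$ meets both $\mathcal{N}$ and $\mathcal{S}$, both $I$ and $J$ are non-empty; by our contradictory assumption, $I \cap J = \emptyset$. The goal is to show that $I = Q$, which would contradict $I \cap J = \emptyset$ since any $b \in J \subseteq Q = I$ would lie in $I \cap J$.

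Next I would unwind the siphon condition in terms of $(I, J)$. For each edge $(i,j) \in A$ of $D$, the associated reaction $R_{ij}\colon N_i + S_j \to N_j + S_i$ has input places $\{N_i, S_j\}$ and output places $\{N_j, S_i\}$. So $R_{ij}$ is an input reaction to $\mathcal{T}$ iff $j \in I$ or $i \in J$; the siphon condition requires that then $R_{ij}$ is also an output reaction from $\mathcal{T}$, i.e.\ $i \in I$ or $j \in J$. Now fix any $j \in I$ and any edge $(i,j) \in A$ entering $j$. Since $j \in I$ and $I \cap J = \emptyset$, we have $j \notin J$, so the siphon condition forces $i \in I$. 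Hence every compartmental predecessor of a vertex in $I$ already lies in $I$.

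The last step is to exploit strong connectivity of $D$. Pick any $j_0 \in I$. For an arbitrary vertex $v \in Q$, strong connectivity yields a directed path $v = v_0 \to v_1 \to \dots \to v_k = j_0$ in $D$. Applying the closure property to $v_{k-1}$, then to $v_{k-2}$, and so on, we conclude inductively that $v_{k-1}, v_{k-2}, \dots, v_0 \in I$, so $v \in I$. Therefore $I = Q$, contradicting $I \cap J = \emptyset$ with $J \neq \emptyset$, and the proposition follows. The only subtle step is the translation from the abstract siphon definition to the explicit predecessor-closure condition on $I$; once that is in place, the contradiction is immediate. (A symmetric argument on $J$ using successor-closure works equally well.)
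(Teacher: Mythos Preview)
Your proof is correct and follows essentially the same approach as the paper: both assume the index sets $I=\{i:N_i\in\mathcal{T}\}$ and $J=\{j:S_j\in\mathcal{T}\}$ are disjoint, translate the siphon condition on a reaction $R_{ij}$ into the implication ``$j\in I\Rightarrow i\in I$ (or $j\in J$)'', and then use strong connectivity of $D$ to derive a contradiction. The only cosmetic difference is that the paper reaches the contradiction in one step by picking a single edge entering $Q_1=I$ from its non-empty complement, whereas you iterate the predecessor-closure along a path to conclude $I=Q$; the underlying argument is identical.
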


In other words, if a siphon in the Petri net $\mathcal{P}(D)$ of a strongly connected compartmental graph $D$ contains type $\mathcal{N}$ and type $\mathcal{S}$ vertices as well, then there is a vertex $q_i$ in the compartmental graph $D$ for which the corresponding vertices $N_i$ and $S_i$ are both contained by the siphon.

\begin{proof}
Let us assume by contradiction that there is no such index. In this case the siphon $\mathcal{T}$ is of the form $\mathcal{N}' \cup \mathcal{S}'$, where the corresponding vertex subsets $Q_1=\{q_i ~|~ N_i \in \mathcal{N}'\}$ and $Q_2=\{q_j ~|~ S_j \in \mathcal{S}'\}$ of the compartmental graph $D$ are disjoint. Consequently, the sets $Q_1$, $Q_2$ and $Q \setminus (Q_1 \cup Q_2)$ form a partition of the vertices of $D$. (The set $Q \setminus (Q_1 \cup Q_2)$ might be empty, in this case we have a partition with two class instead of three.)

Since the compartmental graph $D$ is strongly connected, there must be an edge going into the set $Q_1$ from either or both of the other sets, i.e. there exist vertices $q_j \in Q_1$ and $q_i \in Q \setminus Q_1$ so that $q_i q_j$ is a directed edge in $D$. The corresponding reaction $R_{ij}$ in the Petri net $\mathcal{P}(D)$ is an input reaction to $N_j$ in the siphon $\mathcal{T}$, therefore it must be an output reaction as well. For this to be fulfilled, $S_j$ or $N_i$ should be in the siphon $\mathcal{T}$. By the assumption $\mathcal{T}$ cannot contain $S_j$ since $N_j \in \mathcal{T}$, and $\mathcal{T}$ cannot contain $N_i$ since the corresponding vertex $q_i$ is not in the set $Q_1$. This is a contradiction, consequently there must be an index $i$ for which $N_i \in \mathcal{T}$ and $S_i \in \mathcal{T}$ hold.
\end{proof}

\begin{cor}\label{cor:siphon_first_int} A siphon in the Petri net of a strongly connected compartmental graph either contains the $N_i$ and $S_i$ vertices corresponding to the same compartment $q_i$, or it contains all the vertices of the same type $\mathcal{N}$ or $\mathcal{S}$.
\end{cor}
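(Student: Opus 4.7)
The plan is to obtain the corollary by a short trichotomy on the intersections of the siphon with $\mathcal{N}$ and $\mathcal{S}$, invoking Propositions \ref{siphonN}, \ref{siphonS}, and \ref{siphon_mixed} as black boxes. Let $\mathcal{T}$ denote an arbitrary siphon in $\mathcal{P}(D)$. Since $\mathcal{T} \subseteq \mathcal{N}\cup\mathcal{S}$ and $\mathcal{T}\neq\emptyset$ by the definition of a siphon, at least one of the intersections $\mathcal{T}\cap\mathcal{N}$ and $\mathcal{T}\cap\mathcal{S}$ is nonempty, which gives three mutually exhaustive cases to handle.

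First I would dispatch the two ``pure type'' cases. If $\mathcal{T}\cap\mathcal{S}=\emptyset$, then $\mathcal{T}$ is a nonempty subset of $\mathcal{N}$, which is itself a siphon; Proposition \ref{siphonN} then forces $\mathcal{T}=\mathcal{N}$, so $\mathcal{T}$ contains all vertices of type $\mathcal{N}$. Symmetrically, if $\mathcal{T}\cap\mathcal{N}=\emptyset$, Proposition \ref{siphonS} gives $\mathcal{T}=\mathcal{S}$, so it contains all vertices of type $\mathcal{S}$.

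The remaining case is when $\mathcal{T}\cap\mathcal{N}\neq\emptyset$ and $\mathcal{T}\cap\mathcal{S}\neq\emptyset$. This is precisely the hypothesis of Proposition \ref{siphon_mixed}, whose conclusion is exactly that there exists an index $i\in\{1,\dots,m\}$ with $N_i\in\mathcal{T}$ and $S_i\in\mathcal{T}$, i.e., the siphon contains the pair of vertices associated with a common compartment $q_i$.

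Since these three cases are exhaustive and each yields one of the two alternatives stated in the corollary, the result follows immediately. There is no real obstacle here; the only thing to be careful about is the nonemptiness condition implicit in siphons (so that the reduction to Propositions \ref{siphonN} and \ref{siphonS} in the pure-type cases is legitimate), and the fact that the three cases cover every possibility without overlap issues that would affect the disjunctive conclusion.
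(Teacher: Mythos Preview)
Your proof is correct and is precisely the argument the paper intends: the corollary is stated immediately after Propositions~\ref{siphonN}, \ref{siphonS}, and~\ref{siphon_mixed} with no separate proof, so the trichotomy on $\mathcal{T}\cap\mathcal{N}$ and $\mathcal{T}\cap\mathcal{S}$ that you spell out is exactly what is being left implicit.
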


\begin{prop} \label{siphon_i}
In the Petri net $\mathcal{P}(D')$ of any compartmental graph $D'$ the set $\{N_i, S_i\}$ is a siphon for all $i \in \{1,2,\ldots,m\}$.
\end{prop}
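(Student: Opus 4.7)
The plan is to verify the siphon condition directly from the definition by going through every reaction in $\mathcal{P}(D')$ that touches either $N_i$ or $S_i$ and checking that whenever such a reaction produces one of these two species, it also consumes one of them. Since the Petri net is built from a compartmental graph via the reaction scheme in Eq.~\eqref{eq:trans_react}, every transition has the canonical form $R_{jk}: N_j + S_k \to N_k + S_j$ where $(q_j,q_k)\in A$; there are no other reactions. This rigid structure makes the bookkeeping essentially symbolic.

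First I would write down which reactions are input and which are output for $\{N_i,S_i\}$. A reaction $R_{jk}$ produces $N_k$ and $S_j$, so it is an input reaction of $\{N_i,S_i\}$ if and only if $k=i$ (producing $N_i$) or $j=i$ (producing $S_i$); that is, the input reactions are exactly those of the form $R_{ji}$ (with $j \in \mathcal{D}_i$) together with those of the form $R_{ik}$ (with $k \in \mathcal{R}_i$). Dually, $R_{jk}$ consumes $N_j$ and $S_k$, so it is an output reaction of $\{N_i,S_i\}$ if and only if $j=i$ (consuming $N_i$) or $k=i$ (consuming $S_i$); these are exactly the reactions $R_{ik}$ with $k\in\mathcal{R}_i$ together with the reactions $R_{ji}$ with $j\in\mathcal{D}_i$.

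Comparing the two lists, the set of input reactions of $\{N_i,S_i\}$ coincides with the set of output reactions of $\{N_i,S_i\}$, so the siphon condition is trivially satisfied. Non-emptiness is immediate since the set has two elements. I would also briefly note that no assumption on $D'$ (strong connectivity, connectivity of the Petri net, etc.) is used: the pairing $N_i,S_i$ is always preserved by every reaction in which compartment $q_i$ participates, precisely because each elementary transition in~\eqref{eq:trans_react} exchanges one unit of ``particle'' for one unit of ``free space'' in each of the two involved compartments.

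There is no real obstacle here; the statement is purely combinatorial and follows by inspection of the reaction template. The only thing worth being careful about is to handle uniformly the two sub-cases ($q_i$ appearing as donor and as recipient) and to notice that if $q_i$ is isolated in $D'$, then $\{N_i,S_i\}$ has no associated input or output reactions at all, in which case the siphon condition holds vacuously.
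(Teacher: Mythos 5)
Your proof is correct and follows essentially the same route as the paper: a direct case analysis showing that any reaction producing $N_i$ is of the form $R_{ji}$ and hence consumes $S_i$, while any reaction producing $S_i$ is of the form $R_{ik}$ and hence consumes $N_i$. Your observation that the input and output reaction sets of $\{N_i,S_i\}$ actually coincide, and your remark about the vacuous case of an isolated compartment, are accurate but do not change the substance of the argument.
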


\begin{proof}
If $R$ is an input reaction to the set $\{N_i, S_i\}$, then it corresponds to a transition to or from the compartment $q_i$.

If $R$ is an input reaction to the vertex $N_i$ in the Petri net $\mathcal{P}(D')$, then it represents a transition $a_{ji}$ from some compartment $q_j$ to the compartment $q_i$ in the compartmental graph $D'$. In this case $R=R_{ji}$, and in the Petri net $\mathcal{P}(D')$ this reaction is also an output reaction from the vertex $S_i$.

If $R$ is an input reaction to the vertex $S_i$ in the Petri net $\mathcal{P}(D')$, then it represents a transition $a_{ik}$ from the compartment $q_i$ to some compartment $q_k$ in the compartmental graph $D'$. In this case $R=R_{ik}$, and in the Petri net $\mathcal{P}(D')$ this reaction is also an output reaction from the vertex $N_i$.


\end{proof}

\begin{figure}[htbp]
	\centering
		\resizebox{10cm}{!}{\input{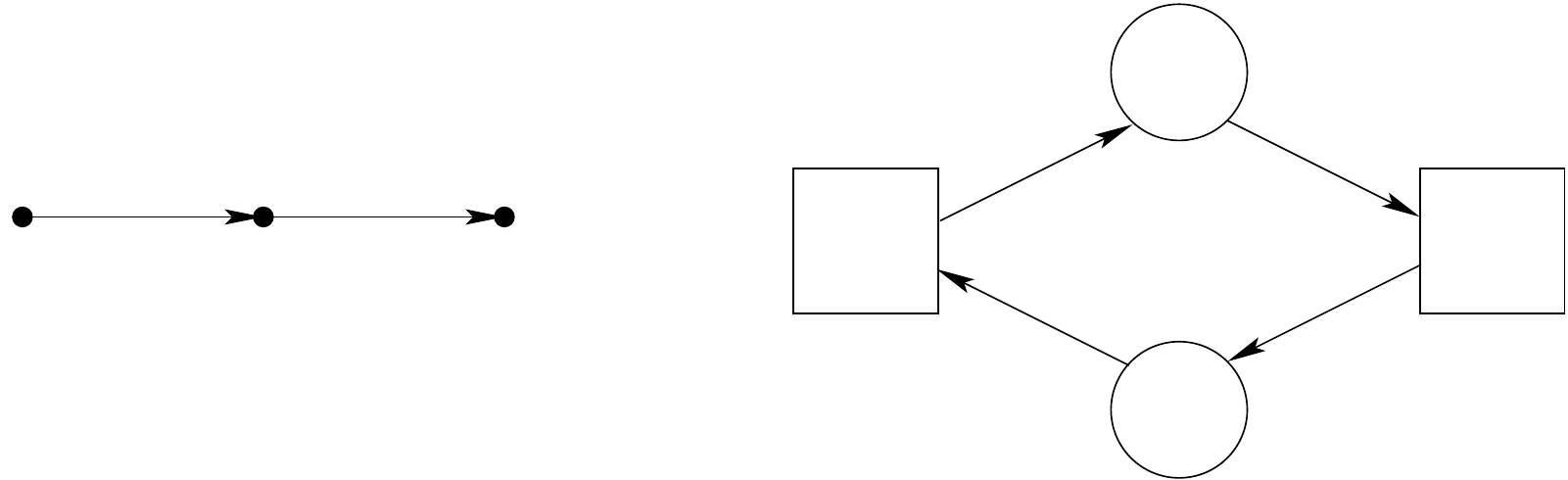_t}}
		\caption{The vertex set $\{N_i, S_i\}$ is a siphon in the Petri net $\mathcal{P}(D')$.}
\end{figure}

\begin{cor} 
Among the siphons contained by the Petri net of a strongly connected compartmental graph the ones that are 
minimal with respect to containment are only the sets $\mathcal{N}$, $\mathcal{S}$ and the sets $\{N_i, S_i\}$ for all indices $i \in \{1,2, \ldots , m\}$.
\end{cor}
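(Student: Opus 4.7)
The plan is to use Propositions \ref{siphonN}, \ref{siphonS}, \ref{siphon_mixed}, and \ref{siphon_i} as a complete structural toolkit, combining them first to classify every siphon and then to read off the minimal ones. The argument naturally splits into two directions: showing that $\mathcal{N}$, $\mathcal{S}$, and each $\{N_i, S_i\}$ is itself a minimal siphon, and showing that no other set qualifies.

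First I would pick an arbitrary siphon $\mathcal{T}$ of $\mathcal{P}(D)$ and classify it by the types of vertices it contains. The three mutually exclusive possibilities are $\mathcal{T} \subseteq \mathcal{N}$, $\mathcal{T} \subseteq \mathcal{S}$, or $\mathcal{T}$ meets both halves of $\mathcal{N}\cup\mathcal{S}$. In the first case Proposition \ref{siphonN} forces $\mathcal{T} = \mathcal{N}$, and in the second case Proposition \ref{siphonS} yields $\mathcal{T} = \mathcal{S}$. In the mixed case, Proposition \ref{siphon_mixed} produces an index $i$ with $N_i, S_i \in \mathcal{T}$, and by Proposition \ref{siphon_i} the two-element set $\{N_i, S_i\}$ is itself a siphon contained in $\mathcal{T}$.

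From this classification the minimality of the three candidate families is immediate. For $\mathcal{N}$: any proper nonempty siphon strictly inside it would be a subset of $\mathcal{N}$, which by Proposition \ref{siphonN} cannot happen unless it equals $\mathcal{N}$ itself; symmetrically for $\mathcal{S}$. For $\{N_i, S_i\}$: its only nonempty proper subsets are the singletons $\{N_i\}$ and $\{S_i\}$, and neither can be a siphon for $m \geq 2$, again by Propositions \ref{siphonN} and \ref{siphonS}. Conversely, any minimal siphon $\mathcal{T}$ falls into one of the three classification cases; in the pure cases it is directly $\mathcal{N}$ or $\mathcal{S}$, and in the mixed case the strict containment of the siphon $\{N_i, S_i\}$ combined with minimality forces $\mathcal{T} = \{N_i, S_i\}$.

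I do not anticipate a substantial obstacle, since all of the heavy lifting has already been done in the preceding propositions; the only mild subtlety is the degenerate case $m = 1$, in which the compartmental graph has no edges and the Petri net has no transitions, so every nonempty vertex subset is vacuously a siphon. This boundary case simply collapses the three families and should be read as tacitly excluded (equivalently, $m \geq 2$, or $D$ carries at least one edge), consistent with the nontrivial setting assumed throughout Section \ref{sec:CompModelClass}.
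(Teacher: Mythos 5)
Your proposal is correct and follows essentially the same route as the paper: classify an arbitrary siphon by whether it is contained in $\mathcal{N}$, in $\mathcal{S}$, or meets both, then invoke Propositions \ref{siphonN}, \ref{siphonS}, \ref{siphon_mixed}, and \ref{siphon_i} exactly as the authors do to identify the minimal siphons and rule out singletons. Your extra remark on the degenerate case $m=1$ (where $\{N_1,S_1\}$ fails to be minimal) is a reasonable edge-case caveat that the paper leaves implicit, but it does not change the substance of the argument.
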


\begin{proof}
Let $\mathcal{T}$ refer to a siphon in the Petri net of a strongly connected compartmental graph.

If the siphon $\mathcal{T}$ contains just type $\mathcal{N}$ or just type $\mathcal{S}$ vertices, then by Propositions \ref{siphonN} and \ref{siphonS} it must contain all the vertices of that type. Consequently, the sets $\mathcal{N}$ and $\mathcal{S}$ are siphons that are minimal with respect to containment.

If the siphon $\mathcal{T}$ contains both type $\mathcal{N}$ and type $\mathcal{S}$ vertices, then by Proposition \ref{siphon_mixed} there must be an index $i \in \{1,2,\ldots , m\}$ for which $\{N_i,S_i\} \subseteq \mathcal{T}$ holds. In Proposition \ref{siphon_i} it was proven that the set $\{N_i,S_i\}$ for all indices is a siphon in the Petri net of a strongly connected compartmental graph. Consequently, if $\mathcal{T}$  is not equal to $\{N_i,S_i\}$ for some index $i$, then it is not minimal with respect to containment. 

A vertex $N_i$ in itself cannot be a siphon in the Petri net of a strongly connected compartmental graph, it follows from Proposition \ref{siphonN}. Similarly, $S_i$ in itself cannot be a siphon there, based on Proposition \ref{siphonS}. Consequently, for every index $i \in \{1,2,\ldots , m\}$ the set $\{N_i,S_i\}$ is a minimal siphon with respect to containment.

\end{proof}

\begin{cor}\label{cor:persistence}
	The dynamics given in Eq. \eqref{eq:comp_n_s} is persistent if the corresponding compartmental graph $D=(Q,A)$ is strongly connected. 
\end{cor}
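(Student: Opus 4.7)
The plan is to verify the two hypotheses of Theorem \ref{thm:persistence}. Both follow almost immediately from two structural facts: the reaction template \eqref{eq:trans_react} preserves the sums $\sum_k n_k$ and $\sum_k s_k$ separately (and hence each capacity $n_i + s_i$), and the siphons of $\mathcal{P}(D)$ have been completely characterised in the preceding propositions.

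For condition~(1), I would observe that every elementary reaction of the form $N_i + S_j \to N_j + S_i$ decreases one $N$-coordinate and increases another by the same amount, and likewise acts oppositely on two $S$-coordinates. Summing over all reactions then shows that the total capacity
\[
I_0 \;:=\; \sum_{i=1}^m \bigl(n_i + s_i\bigr) \;=\; \sum_{i=1}^m c_i
\]
is a positive linear first integral of \eqref{eq:comp_n_s}, whose coefficient vector $\mathbf{1} \in \mathbb{R}^{2m}_{+}$ is strictly positive. This is exactly what condition~(1) of Theorem~\ref{thm:persistence} requires.

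For condition~(2), I would take an arbitrary siphon $\mathcal{T}$ of $\mathcal{P}(D)$ and split into cases according to Corollary~\ref{cor:siphon_first_int}. If $\mathcal{T} \subseteq \mathcal{N}$, Proposition~\ref{siphonN} forces $\mathcal{T}=\mathcal{N}$, and the same token-swap observation shows that $\sum_{i=1}^{m} n_i$ is a positive linear conserved quantity supported on $\mathcal{T}$. If $\mathcal{T} \subseteq \mathcal{S}$, Proposition~\ref{siphonS} forces $\mathcal{T}=\mathcal{S}$ and $\sum_{i=1}^{m} s_i$ plays the same role. Finally, if $\mathcal{T}$ meets both $\mathcal{N}$ and $\mathcal{S}$, Proposition~\ref{siphon_mixed} provides an index $i$ with $\{N_i, S_i\} \subseteq \mathcal{T}$, and the capacity relation $n_i + s_i = c_i$ gives a positive linear conserved quantity supported on the subset $\{N_i, S_i\}$ of $\mathcal{T}$. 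In every case condition~(2) is satisfied, so Theorem~\ref{thm:persistence} yields that \eqref{eq:comp_n_s} is persistent.

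The argument has no real obstacle: essentially all of the work has already been packaged into the siphon classification, and the corollary simply harvests it. The only point requiring mild care is to confirm that each of the three ``atomic'' invariants $\sum_i n_i$, $\sum_i s_i$, and $n_i + s_i$ is genuinely conserved \emph{and} has strictly positive coefficients, as demanded by the phrasing ``positive linear conserved quantity'' in Theorem~\ref{thm:persistence}. Both properties are immediate from the reaction template \eqref{eq:trans_react} and require no further hypothesis beyond strong connectivity of $D$.
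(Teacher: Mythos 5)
Your proof is correct and follows the same route as the paper: the paper's own proof simply cites Theorem \ref{thm:persistence} together with Corollary \ref{cor:siphon_first_int}, and your argument just spells out the details — the strictly positive first integral $\sum_i(n_i+s_i)$ for condition (1), and the case split via Propositions \ref{siphonN}, \ref{siphonS}, \ref{siphon_mixed} matched with the invariants $\sum_i n_i$, $\sum_i s_i$, and $n_i+s_i=c_i$ for condition (2). No gaps; this is exactly the intended verification, made explicit.
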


\begin{proof}
	The statement is an immediate consequence of Theorem \ref{thm:persistence} and Corollary \ref{cor:siphon_first_int}.
\end{proof}

%
\section{Stability results}
Consider a system given by \eqref{eq:comp_n_s} with strongly connected compartmental structure. Using the fact that for each $q_i$ compartment $n_i=c_i-s_i$ we can rewrite the system in the reduced state-space as
\begin{equation}\label{eq:comp}
	\dot n_i=\sum_{j\in \mathcal{D}_i}\mathcal{K}_{ji}(n_j,c_i-n_i)-\sum_{j\in \mathcal{R}_i}\mathcal{K}_{ij}(n_i,c_j-n_j).
\end{equation}
The state-space of the above system is $C:=[0,c_1]\times[0,c_2]\times\dots\times[0,c_m]$ and let $\partial C$ denote the boundary of $C$; that is, $\partial C=C\backslash\mathrm{int}(C)$.

The Jacobian of \eqref{eq:comp} is given by
\begin{equation}
	\qty\big[J(n)]_{ik}=\begin{cases}
		-\sum_{j\in \mathcal{D}_i}\pdv{\mathcal{K}_{ji}(n_j,c_i-n_i)}{n_i}-\sum_{j\in \mathcal{R}_i}\pdv{\mathcal{K}_{ij}(n_i,c_j-n_j)}{n_i}\qquad&\text{if }i=k,\\
		\pdv{\mathcal{K}_{ki}(n_k,c_i-n_i)}{n_k}\qquad&\text{if }k\in \mathcal{D}_i\text{ and }k\not\in\mathcal{R}_i,\\
		\pdv{\mathcal{K}_{ik}(n_i,c_k-n_k)}{n_k}\qquad&\text{if }k\not\in \mathcal{D}_i\text{ and }k\in\mathcal{R}_i,\\
		\pdv{\mathcal{K}_{ki}(n_k,c_i-n_i)}{n_k}+\pdv{\mathcal{K}_{ik}(n_i,c_k-n_k)}{n_k}\qquad&\text{if }k\in \mathcal{D}_i\text{ and }k\in\mathcal{R}_i,\\
		0\qquad&\text{otherwise.}
	\end{cases}
\end{equation}
The (A2) property of the rate functions imply that each diagonal entry is nonpositive and each off-diagonal entry is nonnegative. Since the sum of each column is zero, we conclude that the system is compartmental in the sense of \cite{jacquez1993qualitative}. Systems satisfying the latter property are also called cooperative.

The following lemmata and proofs will adapt the ideas of \cite{margaliot2012stability} and \cite{raveh2015ribosome} for the studied more general system class. Moreover, we will also use the persistence result of Corollary \ref{cor:persistence}.

\begin{lemma}\label{lem:intC_remains}
	Consider a compartmental system of the form \eqref{eq:comp} with a strongly connected compartmental structure. Then, for any $n(0)\in\mathrm{int}(C)$ the solution satisfies $n(t)\in\mathrm{int}(C)$ for any $t\ge0$.
\end{lemma}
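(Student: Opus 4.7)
The plan is to argue by contradiction, combining the strong connectivity of $D$ with the persistence result of Corollary \ref{cor:persistence}. Forward invariance of $C$ itself is standard (it follows from the nonnegativity of the kinetic representation together with the relations $n_i+s_i=c_i$), so if the conclusion fails there is a first exit time
\[t^*:=\inf\{t>0:n(t)\in\partial C\}>0,\]
with $n(t)\in\mathrm{int}(C)$ for every $t\in[0,t^*)$ and some coordinate of $n(t^*)$ saturated at $0$ or at $c_k$. By the evident $n_i\leftrightarrow s_i$ symmetry of the model it is enough to treat the case $n_k(t^*)=0$.

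The central step is to evaluate $\dot n_k$ at $t^*$ from \eqref{eq:comp}. Every outgoing term is of the form $\mathcal{K}_{kj}(n_k(t^*),c_j-n_j(t^*))=\mathcal{K}_{kj}(0,\cdot)$ and vanishes by (A3), so
\[\dot n_k(t^*)=\sum_{j\in\mathcal{D}_k}\mathcal{K}_{jk}\bigl(n_j(t^*),c_k\bigr)\ge 0.\]
On the other hand, $n_k(t)>0$ for $t<t^*$ together with $n_k(t^*)=0$ forces $\dot n_k(t^*)\le 0$, so every summand above must vanish. Because $c_k>0$, the natural strict positivity of the rate functions when all reactant-side arguments are strictly positive (a mild strengthening of (A3) satisfied by every example considered in the paper) then forces $n_j(t^*)=0$ for each donor $q_j$ of $q_k$.

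I would then iterate this deduction: each such $n_j$ also satisfies $n_j(t)>0$ on $[0,t^*)$, so the same argument applied at $q_j$ yields $n_l(t^*)=0$ for every $l\in\mathcal{D}_j$. Because $D$ is strongly connected, every vertex of $Q$ admits a directed path into $q_k$; walking these paths vertex by vertex and repeating the argument, after finitely many steps one obtains $n_l(t^*)=0$ for all $l\in\{1,\ldots,m\}$. At this all-zero configuration (A3) makes every rate of \eqref{eq:comp_n_s} vanish, so $n(t^*)=0$ is an equilibrium of \eqref{eq:comp} and the trajectory is pinned there for all $t\ge t^*$; its $\omega$-limit set therefore lies in $\partial C$, contradicting Corollary \ref{cor:persistence}. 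The case $n_k(t^*)=c_k$ is identical after swapping $(n_i,\mathcal{D}_i)\leftrightarrow(s_i,\mathcal{R}_i)$, collapsing instead onto the equilibrium $n=c$.

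The main obstacle will be justifying the cascade step, i.e.\ the implication ``$\mathcal{K}_{jk}(n_j(t^*),c_k)=0$ and $c_k>0\Rightarrow n_j(t^*)=0$''. This needs a little more than the literal statement of (A3), namely strict positivity of each rate function whenever all its reactant-side arguments are strictly positive. An alternative, should one prefer to avoid strengthening (A3), is to push the cascade all the way down to $n(t^*)=0$ and then invoke backward uniqueness of the $C^1$ flow of \eqref{eq:comp} to rule out any trajectory with $n(0)\in\mathrm{int}(C)$ reaching this equilibrium in finite time, bypassing persistence entirely.
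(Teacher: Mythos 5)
Your proof is correct and rests on the same core mechanism as the paper's: a first exit time $t^*$, the sign of $\dot n_k(t^*)$ computed from \eqref{eq:comp} using (A3), and strong connectivity of the compartmental graph. The difference is only in how the contradiction is closed. The paper runs your cascade in reverse: since $\sum_i n_i$ is conserved and positive, not every compartment can be empty at $t^*$, so stepping backwards along a directed path from a non-empty compartment to an empty one produces an empty compartment with a non-empty donor, whence $\dot n_i(t^*)>0$ contradicts the minimality of $t^*$ directly. You instead propagate emptiness to every vertex and then appeal to Corollary \ref{cor:persistence}; that is legitimate (the corollary is established independently in Section 4 via the siphon analysis, so there is no circularity), but it is heavier than needed --- once you have $n(t^*)=0^{(m)}$ you can simply contradict conservation of $I(n)=\sum_i n_i$, since $I(n(0))>0$ for $n(0)\in\mathrm{int}(C)$; this also spares you the backward-uniqueness fallback you sketch at the end. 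Finally, the strict-positivity property you flag ($\mathcal{K}_{jk}(u,v)>0$ whenever $u,v>0$) is indeed not literally implied by (A1)--(A3), but the paper's own proof makes exactly the same tacit assumption when it asserts $\mathcal{K}_{ki}(n_k,c_i)>0$, so you have not introduced a gap that the paper avoids; you were right to make it explicit.
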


In other words, $\mathrm{int}(C)$ is an invariant set of such a system.

\begin{proof}
	To obtain a contradiction, suppose that there exists a (minimal) time $\tau>0$ such that $n(\tau)\in\partial C$. We need to consider the following two cases.
	\begin{enumerate}
		\item There exists an empty compartment. In this case, due to the strongly connected structure, there must exist an empty compartment with at least one non-empty donor compartment as well. To see this, consider a directed path from any non-empty compartment to any empty compartment. Stepping backwards from the empty compartment along this path until we reach a non-empty compartment establishes our assertion.

			Let $i$ be an index such that $n_i(\tau)=0$ and $n_k(\tau)>0$ holds for some $k\in \mathcal{D}_i$. Then \eqref{eq:comp} takes the form
			\begin{equation}
				\dot n_i(\tau)=\sum_{j\in \mathcal{D}_i}\mathcal{K}_{ji}(n_j,c_i)\ge\mathcal{K}_{ki}(n_k,c_i)>0
			\end{equation}
			which means that $\dot n_i(t)>0$ on the interval $[\tau-\sigma,\tau]$ for some $\sigma>0$. This leads to a contradiction with $n_i(\tau)=0$, further implying that there are no empty compartments altogether.
		\item There exists a full compartment. In this case, by a similar argument, there must exist a full compartment with at least one non-full recipient compartment as well; that is, there exists an index $i$ such that $n_i(\tau)=c_i$ and $n_k(\tau)<c_k$ holds for some $k\in \mathcal{R}_i$. Then \eqref{eq:comp} takes the form
			\begin{equation}
				\dot n_i(\tau)=-\sum_{j\in \mathcal{R}_i}\mathcal{K}_{ij}(c_i,c_j-n_j)\le-\mathcal{K}_{ik}(c_i,c_k-n_k)<0
			\end{equation}
			which means that $\dot n_i(t)<0$ on the interval $[\tau-\sigma,\tau]$ for some $\sigma>0$. This leads to a contradiction with $n_i(\tau)=c_i$, further implying that there are no full compartments altogether.
	\end{enumerate}
\end{proof}

Let $0^{(m)},c^{(m)}\in\mathbb{R}^m$ be defined by
\begin{equation*}
	0^{(m)}=\begin{bmatrix}0\\0\\\vdots\\0\end{bmatrix}\qquad c^{(m)}=\begin{bmatrix}c_1\\c_2\\\vdots\\c_m\end{bmatrix}.
\end{equation*}

\begin{lemma}\label{lem:goto_intC}
	Consider a compartmental system of the form \eqref{eq:comp} with a strongly connected compartmental structure. Then, for any $n(0)\in\partial C$, $n(0)\ne 0^{(m)}$, $n(0)\ne c^{(m)}$ the solution satisfies $n(\tau)\in\mathrm{int}(C)$ for some $\tau>0$.
\end{lemma}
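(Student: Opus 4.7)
The plan is to induct on the number $k := |\{i : n_i(0) \in \{0, c_i\}\}|$ of extreme coordinates of $n(0)$, showing that $k$ strictly decreases in finite positive time until the trajectory enters $\mathrm{int}(C)$. A useful preliminary observation is that summing \eqref{eq:comp} over $i$ gives $\tfrac{d}{dt}\sum_i n_i \equiv 0$, so the total content $V(n) := \sum_i n_i$ is conserved. The hypotheses $n(0) \neq 0^{(m)}$ and $n(0) \neq c^{(m)}$ then yield $0 < V(n(t)) < \sum_i c_i$ for all $t$, which in particular guarantees that whenever there is an empty compartment at time $t$ there is also a non-empty one, and whenever there is a full compartment there is also a non-full one.

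For the inductive step at time $t_0$ with $k > 0$, let $E = \{i : n_i(t_0) = 0\}$ and $F = \{i : n_i(t_0) = c_i\}$. If $E \neq \emptyset$, the strong connectivity of $D$ supplies a directed path from some non-empty compartment to some compartment in $E$; walking along this path produces indices $i_0 \in E$ and $j_0 \in \mathcal{D}_{i_0}$ with $n_{j_0}(t_0) > 0$. Reading off \eqref{eq:comp}, every outflow term at $i_0$ vanishes by (A3) (since $n_{i_0}(t_0) = 0$ and $N_{i_0}$ lies in the support of each outflow reactant), while the inflow $\mathcal{K}_{j_0 i_0}(n_{j_0}(t_0), c_{i_0})$ is strictly positive by the usual strict positivity of rate functions on positive inputs, so $\dot n_{i_0}(t_0) > 0$. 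Symmetrically, if $F \neq \emptyset$, we find $i_0' \in F$ with a non-full recipient and obtain $\dot n_{i_0'}(t_0) < 0$.

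Continuity of the solution then yields a $\delta > 0$ such that the chosen $i_0$ (resp.\ $i_0'$) satisfies $n_{i_0}(t) > 0$ (resp.\ $n_{i_0'}(t) < c_{i_0'}$) on $(t_0, t_0+\delta]$, while every coordinate $n_l$ that already lies in $(0, c_l)$ at $t_0$ remains there on $[t_0, t_0+\delta]$; since only finitely many coordinates need to be controlled, a common $\delta > 0$ exists. Hence the number of extreme coordinates at $t_0 + \delta$ is strictly less than $k$. By conservation, $n(t_0 + \delta)$ still satisfies the standing hypotheses of the lemma, so the induction proceeds. After at most $k$ such iterations we reach a time $\tau > 0$ with $n(\tau) \in \mathrm{int}(C)$; Lemma \ref{lem:intC_remains} then ensures that the trajectory in fact remains there.

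The main obstacle I anticipate is the strict positivity of $\mathcal{K}_{j_0 i_0}(n_{j_0}(t_0), c_{i_0})$: this is not literally forced by the stated assumptions (A1)--(A3), but is implicit in the standing \emph{monotone reaction rates} hypothesis and is used in exactly the same way in the proof of Lemma \ref{lem:intC_remains}, so it is legitimate here. Everything else is a routine continuity-and-induction argument, with the finiteness of $m$ guaranteeing that the common $\delta$ in each step is strictly positive.
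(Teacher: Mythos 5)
Your proof is correct, but it takes a genuinely different route from the paper's. You run a finite induction on the number of coordinates sitting at $0$ or $c_i$, using the conserved total $\sum_i n_i$ to guarantee at each stage a non-empty donor of some empty compartment (and a non-full recipient of some full one), whence a strictly signed derivative and, by continuity, a strict decrease of the count after a short time; at most $2m$ such steps land the trajectory in $\mathrm{int}(C)$, where Lemma \ref{lem:intC_remains} keeps it. The paper instead fixes a target time $\tau$ in advance, establishes a quantitative boundary-repelling estimate (a uniform lower bound $K(\delta,\Delta)$ on $\dot n_i$ near the boundary), and propagates explicit lower bounds $\epsilon_k$ generation by generation through the strongly connected graph, together with the complementary system in the variables $s_i=c_i-n_i$. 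Your argument is shorter and more elementary, and it fully proves the lemma as stated; what it does not deliver is the stronger quantitative conclusion recorded in Remark \ref{rem:instant} (entry into $[\epsilon,c_1-\epsilon]\times\dots\times[\epsilon,c_m-\epsilon]$ after an \emph{arbitrarily short} prescribed time $\tau$), which the paper extracts from its proof and cites later; with your approach one only gets \emph{some} $\tau>0$, determined a posteriori by the chain of $\delta$'s. The one hypothesis you flag — strict positivity of $\mathcal{K}_{ji}$ when all reactant coordinates are positive — is indeed not literally implied by (A1)--(A3), but the paper relies on exactly the same tacit strengthening in the proof of Lemma \ref{lem:intC_remains} and in the claim $K_1>0$, so your use of it is on the same footing.
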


\begin{proof}
	First we define the following boundary-repelling property.\newline

	(\textbf{BR}) For each $\delta>0$ and sufficiently small $\Delta>0$, there exists $K=K(\delta,\Delta)>0$ such that for each $t\ge0$
	\begin{enumerate}
		\item the conditions
			\begin{enumerate}
				\item $n_i(t)\le\Delta$,
				\item there exists $k\in \mathcal{D}_i$ such that $n_k(t)\ge\delta$
			\end{enumerate}
			imply $\dot n_i(t)\ge K$, and
		\item the conditions
			\begin{enumerate}
				\item $n_i(t)\ge c_i-\Delta$
				\item there exists $k\in \mathcal{R}_i$ such that $n_k(t)\le c_k-\delta$
			\end{enumerate}
			imply $\dot n_i(t)\le -K$.
	\end{enumerate}

	\eqref{eq:comp} satisfies the above property. To see this, consider any compartment $q_i$. Without the loss of generality we can assume that $\mathcal{D}_i$ contains at least one index, let this be $k$. In this case
	\begin{equation}
		\dot n_i(t)\ge\mathcal{K}_{ki}(\delta,c_i-\Delta)-\sum_{j\in \mathcal{R}_i}\mathcal{K}_{ij}(\Delta,c_j):=K_1.
	\end{equation}
	Similarly, we can assume that $\mathcal{R}_i$ contains at least one index, let this be $l$. In this case
	\begin{equation}
		\dot n_i(t)\le\sum_{j\in \mathcal{D}_i}\mathcal{K}_{ji}(c_j,\Delta)-\mathcal{K}_{il}(c_i-\Delta,c_l-\delta):=-K_2.
	\end{equation}
	The properties of the rate functions imply that for a sufficiently small $\Delta$ we have $K_1>0$ and $-K_2<0$, thus taking $K=\min\qty{K_1,K_2}$ concludes our assertion.\newline

	Next, we will show that for each compartment $n_i(\tau)>0$ holds for some $\tau>0$.

	Without the loss of generality we can assume that there exists an index $i$ such that $n_i(t)\ge\epsilon_0$ on the interval $[0,\tau]$ for some $\epsilon_0>0$ and $\tau>0$. Define $\tau_m=\frac{\tau}{m}$ and proceed by induction. For $k=1,2,\dots,m$ we will define an appropriate $\epsilon_k>0$ and show that the $k$th generation recipients of the compartment $q_i$ have particle concentration of at least $\epsilon_k$  on the interval $[k\tau_m,\tau]$.

	Pick any $j\in \mathcal{R}_i$ (first generation recipient) and sufficiently small $\Delta>0$, define $K=K(\epsilon_0,\Delta)$ and $\epsilon_1=\min\qty{\Delta,K\tau_m}$ and let $t_0\in[0,\tau_m]$ such that $n_j(t_0)\ge\epsilon_1$. Such a $t_0$ must exist, since assuming $n_j(t)<\epsilon_1\le\Delta$ for each $t\in[0,\tau_m]$ would imply via (\textbf{BR}) that $\dot n_j(t)\ge K$ for each $t\in[0,\tau_m]$. This further implies that $n_j(\tau_m)\ge n_j(0)+K\tau_m\ge\epsilon_1$. This leads to a contradiction with $n_j(\tau_m)<\epsilon_1$.

	Our next claim is that $n_j(t)\ge\epsilon_1$ for each $t\in[t_0,\tau]$ and in particular $[\tau_m,\tau]$. Conversely, suppose that there exists some $t_1\in(t_0,\tau]$ such that $\xi:=n_j(t_1)<\epsilon_1$ and define $\sigma=\min\qty{t\in(t_0,\tau):n_j(t)\le\xi}$. Since $n_j(\sigma)\le\xi<\epsilon_1\le\Delta$, (\textbf{BR}) shows that $\dot n_j(\sigma)\ge K$; that is, $\dot n_j(t)>0$ on the interval $[\sigma-\nu,\sigma]$ for some $\nu>0$. But this would imply that $n_j(\sigma-\nu)<n_j(\sigma)$, contradicting the minimality of $\sigma$.

	Define $K=K(\epsilon_1,\Delta)$ and $\epsilon_2=\min\qty{\Delta,K\tau_m}$ and repeat the above steps for the set $\mathcal{R}_j$ for $j\in\mathcal{R}_i$ (second generation recipients). In subsequent induction steps define $K=K(\epsilon_k,\Delta)$ and $\epsilon_{k+1}=\min\qty{\Delta,K\tau_m}$ and repeat the above for the $k$th generation recipients of the compartment $q_i$. Since the compartments are strongly connected after at most $m$ induction steps we conclude that $n_i(\tau)>0$ for each $i=1,2,\dots,m$.

	To show that $n_i(\tau)<c_i$ holds as well, consider the complementary system obtained by rewriting \eqref{eq:comp_n_s} using $s_i=c_i-n_i$ as
	\begin{equation}\label{eq:comp_comp}
		\dot s_i=-\sum_{j\in \mathcal{D}_i}\mathcal{K}_{ji}(c_j-s_j,s_i)+\sum_{j\in \mathcal{R}_i}\mathcal{K}_{ij}(c_i-s_i,s_j).
	\end{equation}
	Repeating the above steps for \eqref{eq:comp_comp} shows that $s_i(\tau)>0$, further implying that $n_i(\tau)<c_i$; that is, indeed $n(\tau)\in\mathrm{int}(C)$.
\end{proof}

\begin{rem}\label{rem:instant}
	The proof also shows that for each $\tau>0$ there exists $\epsilon(\tau)>0$ with $\epsilon(\tau)\rightarrow0$ as $\tau\rightarrow0$, such that $n(\tau)\in[\epsilon,c_1-\epsilon]\times[\epsilon,c_2-\epsilon]\times\dots\times[\epsilon,c_m-\epsilon]$; that is, even if the initial value is on $\partial C$ the orbit enters $\mathrm{int}(C)$ after an arbitrarily short time.
\end{rem}


\begin{rem}
	A similar argument shows that $\partial C$ only contains the two trivial equilibria corresponding to an empty and a full network.

	To see this, let us first assume that $n^*$ is an equilibrium and for a compartment $q_i$ we have $n_i^*=0$. Then, by \eqref{eq:comp}
	\begin{equation*}
		\dot n_i^*=\sum_{j\in \mathcal{D}_i}\mathcal{K}_{ji}(n_j^*,c_i)=0
	\end{equation*}
	which is only possible if $n_j^*=0$ for each $j\in \mathcal{D}_i$. Induction shows that $n^*=0^{(m)}$.

	Next, let us assume that for a compartment $q_i$ we have $n_i^*=c_i$. Then, by \eqref{eq:comp}
	\begin{equation*}
		\dot n_i^*=-\sum_{j\in \mathcal{R}_i}\mathcal{K}_{ij}(c_i,c_j-n_j^*)=0
	\end{equation*}
	which is only possible if $n_j^*=c_j$ for each $j\in \mathcal{R}_i$. Induction shows that $n^*=c^{(m)}$.
\end{rem}

For a given initial condition $a\in C$, let $\varrho(t,a)$ denote the solution at time $t$ with $\varrho(0,a)=a$; that is $\varrho(t,a)=n(t)$ with $n(0)=a$. Since the total number of particles is conserved, the function $I:\mathbb{R}^m\mapsto\mathbb{R}$ defined by $I(y)=\sum_{i=1}^my_i$ is a first integral. For $s\in\qty\Big[0,I(c^{(m)})]$ let $L_s\subset C$ be the level set of $I$; that is,
\begin{equation}\label{eq:CompClass}
	L_s=\qty\big{a\in C:I(a)=s}.
\end{equation}
Using the terminology of CRN theory \cite{Feinberg2019}, the level sets defined in Eq. \eqref{eq:CompClass} are also called \textit{stoichiometric compatibility classes}.

\begin{prop}\label{thm:asymp}
	Consider a compartmental system of the form \eqref{eq:comp} with a strongly connected compartmental structure. Then, for any $s\in\qty\Big[0,I(c^{(m)})]$ the set $L_s$ contains a unique steady state $e_s$ satisfying $\lim_{t\rightarrow\infty}\varrho(t,a)=e_s$ for any $a\in L_s$.
\end{prop}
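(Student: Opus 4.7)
The plan is to dispose of the degenerate level sets $s\in\{0,I(c^{(m)})\}$ trivially, and for intermediate $s$ to produce an equilibrium by a fixed-point argument, prove its uniqueness via a Perron--Frobenius statement about an averaged Jacobian, and obtain global attraction from the $L^1$ contraction intrinsic to mass-conservative cooperative systems. When $s=0$ the set $L_s$ is the singleton $\{0^{(m)}\}$ and when $s=I(c^{(m)})$ it is $\{c^{(m)}\}$; both are equilibria by the Remark, so the statement is vacuous there. Fix $s\in(0,I(c^{(m)}))$; then $L_s$ is a nonempty, compact, convex, forward-invariant subset of $C$, and Lemma \ref{lem:intC_remains}, Lemma \ref{lem:goto_intC} together with Remark \ref{rem:instant} imply that every trajectory starting in $L_s$ enters $L_s\cap\mathrm{int}(C)$ in arbitrarily short time and remains there forever.

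For existence, I would apply Brouwer's fixed-point theorem to the time-$T$ map $\varrho(T,\cdot)\colon L_s\to L_s$ for each $T>0$, extract a convergent subsequence of fixed points as $T_n\downarrow 0$ by compactness of $L_s$, and invoke continuous dependence on the initial data to identify the limit $e_s$ as a zero of the vector field; the Remark excluding non-trivial boundary equilibria forces $e_s\in\mathrm{int}(C)$. For uniqueness, given two equilibria $e,e'\in L_s\cap\mathrm{int}(C)$ the integral form of the mean value theorem gives
\begin{equation*}
0 \;=\; f(e)-f(e') \;=\; \tilde J\,(e-e'),\qquad \tilde J \;:=\; \int_0^1 J(e'+\sigma(e-e'))\,d\sigma.
\end{equation*}
The Jacobian formula preceding Lemma \ref{lem:intC_remains}, combined with the column-sum-zero observation made there, shows that every $J(n)$ with $n\in\mathrm{int}(C)$ is a Metzler matrix with zero column sums whose off-diagonal sparsity pattern coincides with the symmetrized adjacency of the compartmental graph $D$; strong connectivity of $D$ then makes each $J(n)$, and hence the convex combination $\tilde J$, irreducible. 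Perron--Frobenius applied to irreducible Metzler matrices with zero column sums yields that $\ker\tilde J$ is one-dimensional and spanned by a strictly positive vector $v$. Since $e-e'\in\ker I$ whereas $\sum_i v_i>0$, we must have $e=e'$.

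For global attraction of $e_s$ within $L_s$, the same spectral picture applied along trajectories yields an $L^1$ Lyapunov function. For any solution $n(t)$ in $\mathrm{int}(C)$, setting $z(t):=n(t)-e_s$ yields $\dot z(t)=\tilde J(t)\,z(t)$ with $\tilde J(t)$ again Metzler, zero-column-sum and irreducible, so $\|z(t)\|_1$ is non-increasing; moreover $z(t)\in\ker I$ has mixed signs whenever $z(t)\ne 0$, and irreducibility together with the zero-column-sum structure then forces the derivative of $\|z(t)\|_1$ to be strictly negative. LaSalle's invariance principle on the compact invariant set $L_s$, extended to initial data on $\partial C$ via Lemma \ref{lem:goto_intC}, then gives $\varrho(t,a)\to e_s$ for every $a\in L_s$.

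The main obstacle, to my mind, is the uniqueness step: one has to translate the purely graph-theoretic strong connectivity of $D$ into the spectral conclusion about $\tilde J$, which requires verifying irreducibility of the Jacobian averaged along an arbitrary segment (or trajectory) in $\mathrm{int}(C)$---itself relying on assumption (A2) being strict enough inside the orthant along the edges of $D$---and then invoking the Perron--Frobenius characterization of the one-dimensional kernel of a zero-column-sum irreducible Metzler matrix. Once that spectral picture is in place, both uniqueness and global attraction follow with only routine analysis.
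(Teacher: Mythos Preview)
Your argument is essentially correct, but it follows a genuinely different route from the paper's. The paper's proof is very short: after disposing of the degenerate level sets it invokes a black-box result of Mierczy\'nski (Theorem~10 in \cite{Mierczynski1995}) for cooperative irreducible systems admitting a first integral with positive gradient, which gives the dichotomy ``either a unique equilibrium attracting the whole level set, or no equilibria and every $\omega$-limit set is empty''; the second alternative is then excluded by Bolzano--Weierstrass, and the persistence result of Corollary~\ref{cor:persistence} (the main payoff of the Petri-net analysis in Section~4) is used to keep the $\omega$-limit set off $\partial C$. In contrast, you construct everything by hand: Brouwer for existence, Perron--Frobenius on the averaged Jacobian for uniqueness, and an $\ell^1$-Lyapunov/LaSalle argument for global attraction. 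Your route is longer but entirely self-contained; in particular it does not rely on the persistence corollary at all (Lemmata~\ref{lem:intC_remains}--\ref{lem:goto_intC} suffice), and it in effect anticipates and strengthens the $\ell^1$-nonexpansiveness that the paper only records afterwards in Proposition~\ref{prop:contractive}. The paper's approach, on the other hand, is more economical and makes the narrative link ``Petri-net siphon analysis $\Rightarrow$ persistence $\Rightarrow$ stability'' explicit.

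Two remarks on your write-up. First, the concern you flag about needing (A2) to be strict on $\mathrm{int}(C)$ so that the off-diagonal pattern of $J(n)$ matches the symmetrized compartmental graph is legitimate, but it is exactly the same implicit assumption the paper makes when it asserts ``$J(n)$ is irreducible on $\mathrm{int}(C)$''; you are not introducing a new hypothesis. Second, in the LaSalle step you should say explicitly how you handle times at which some coordinate of $z(t)$ vanishes (via Dini derivatives or the standard upper-right derivative of $\|z\|_1$), since this is where readers typically stumble; the strict-decrease conclusion then follows cleanly from irreducibility because $\sum_i z_i=0$ forces mixed signs whenever $z\neq 0$.
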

\begin{proof}
	Since $L_0=\qty{0^{(m)}}$ and $\varrho(t,0^{(m)})=0^{(m)}$, the statement holds for an empty network with $e_0=0^{(m)}$. Similarly, since $L_{I(c^{(m)})}=\qty{c^{(m)}}$ and $\varrho(t,c^{(m)})=c^{(m)}$, the statement holds for a full network with $e_{I(c^{(m)})}=c^{(m)}$.

	Choose $s\in\qty\Big(0,I(c^{(m)}))$ and $a\in L_s$. By the strongly connected compartmental structure the Jacobian $J(n)$ is irreducible on $\mathrm{int}(C)$ but may become reducible on $\partial C$. However, Lemmata \ref{lem:intC_remains} and \ref{lem:goto_intC} along with Remak \ref{rem:instant} show that \eqref{eq:comp} has repelling boundary; that is, $\varrho(t,a)\in\mathrm{int}(C)$ after an arbitrarily short time even if $a\in L_s\cap\partial C$. As a consequence, \eqref{eq:comp} is a cooperative irreducible system evolving in $\mathrm{int}(C)$ admitting a first integral with positive gradient. The result \cite[Theorem 10.]{Mierczynski1995} shows that $L_s$ either has precisely one equilibrium that attracts the whole level set or has zero equilibria and each $\omega$-limit set of the level set is empty. However, by the boundedness of the sequence $\qty{\varrho(k,a):k=1,2,\dots}\subset\mathrm{int}(C)$ the Bolzano-Weierstrass theorem implies that there is a convergent subsequence; that is, the $\omega$-limit set of $a$ cannot be empty. Furthermore, Corollary \ref{cor:persistence} implies that $\omega(a)\cap\partial C=\emptyset$ and the proof is complete.
\end{proof}

In the proofs above we used the notion of cooperative systems directly, however, the underlying theory involves so-called (strongly) monotone systems, which in our case, is a direct consequence of cooperativity, as shown by our next result.

For two points $x,y\in\mathbb{R}^m$, let
\begin{align}
	x\le y\qquad&\text{if }x_i\le y_i\text{ for }i=1,2,\dots,m,\\
	x<y\qquad&\text{if }x\le y\text{ and }x\neq y,\\
	x\ll y\qquad&\text{if }x_i<y_i\text{ for }i=1,2,\dots,m.
\end{align}
\begin{prop}\label{prop:monotone}
	Consider a compartmental system of the form \eqref{eq:comp} with a strongly connected compartmental structure. Then, for any $s\in\qty\Big[0,I(c^{(m)})]$ and $a,b\in L_s$, the relation $a\le b$ implies $\varrho(t,a)\le\varrho(t,b)$ and $a<b$ implies $\varrho(t,a)\ll\varrho(t,b)$ for any $t>0$.
\end{prop}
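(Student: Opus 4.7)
The plan is to recognize this proposition as a classical consequence of the theory of cooperative (quasi-monotone) ODEs, combined with the irreducibility of the Jacobian on $\mathrm{int}(C)$ and the boundary-repelling property from Lemma \ref{lem:goto_intC} and Remark \ref{rem:instant}. Cooperativity itself was already noted right after the Jacobian computation: every off-diagonal entry of $J(n)$ is nonnegative on the entire state space $C$.

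For the weak inequality I would set $z(t):=\varrho(t,b)-\varrho(t,a)$ and use the mean-value representation $\dot z(t)=M(t)\,z(t)$ with $M(t)=\int_0^1 J\bigl(\varrho(t,a)+r\,z(t)\bigr)\,\mathrm{d}r$. Since $J$ is Metzler pointwise on $C$, so is $M(t)$, and hence the fundamental solution $\Phi(t,0)$ of the linear time-varying system leaves the nonnegative orthant invariant (the classical Kamke--M\"uller cone-invariance argument). Therefore $z(0)=b-a\ge 0$ forces $z(t)=\Phi(t,0)\,z(0)\ge 0$ for every $t\ge 0$, which is exactly $\varrho(t,a)\le\varrho(t,b)$.

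For the strict inequality, suppose $a<b$ and fix $t>0$. Remark \ref{rem:instant} applied to each initial condition (together with Lemma \ref{lem:goto_intC}) yields some $0<\tau<t$ such that $\varrho(\tau,a)$ and $\varrho(\tau,b)$ both lie in $\mathrm{int}(C)$; by continuity of the flow, both orbits remain in $\mathrm{int}(C)$ on some subinterval $[\tau,\tau']\subseteq[\tau,t]$. On this subinterval the matrices $M(s)$ in the variational equation are Metzler \emph{and} irreducible, because the irreducibility of $J(n)$ on $\mathrm{int}(C)$ mirrors the strong connectivity of $D$ (precisely as invoked in the proof of Proposition \ref{thm:asymp}). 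The standard Perron--Frobenius argument for time-varying irreducible Metzler generators --- equivalent to Hirsch's strong monotonicity theorem --- then gives that $\Phi(\tau',\tau)$ is entrywise strictly positive, so $z(\tau')\gg 0$; propagating by the nonnegativity of $\Phi(t,\tau')$ from the previous step yields $z(t)\gg 0$.

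The main obstacle is guaranteeing that this strict inequality holds for \emph{every} $t>0$, including arbitrarily small ones, since either $a$ or $b$ may start on $\partial C$ where $J$ can become reducible. Remark \ref{rem:instant} is precisely the tool for overcoming this: because orbits enter $\mathrm{int}(C)$ after an arbitrarily short time, the irreducibility step above can be performed with $\tau$ pushed as close to $0$ as desired, which covers the entire half-line $(0,\infty)$ and concludes the proof.
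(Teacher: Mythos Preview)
Your argument is correct and follows the same path as the paper---cooperativity of $J$ for weak monotonicity, irreducibility of $J$ on $\mathrm{int}(C)$ (combined with the boundary-repelling behaviour) for strong monotonicity---the only difference being that the paper outsources the conclusion to Theorems~1 and~3 of \cite{Mierczynski1995} while you spell out the underlying variational/Kamke--M\"uller mechanism explicitly. One minor omission: your appeal to Lemma~\ref{lem:goto_intC} and Remark~\ref{rem:instant} presupposes that neither $a$ nor $b$ equals $0^{(m)}$ or $c^{(m)}$, trivial extremes the paper disposes of in its opening sentence.
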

\begin{proof}
	If $x$ or $y$ is equal to $0^{(m)}$ or $c^{(m)}$, then the statement trivially holds. In any other case, use the proof of Proposition \ref{thm:asymp} to conclude that \eqref{eq:comp} is a cooperative irreducible system evolving in a convex and open set, namely, $\mathrm{int}(C)$. The statement is a direct consequence of \cite[Theorem 1., Theorem 3.]{Mierczynski1995}.
\end{proof}

Our final result in this topic gives further insight into the qualitative behaviour of \eqref{eq:comp}.

\begin{prop}\label{prop:contractive}
	Consider a compartmental system of the form \eqref{eq:comp} with a strongly connected compartmental structure. Then, for any $a,b\in C$ initial values and $t\ge0$	
	\begin{equation*}
		\norm{\varrho(t,a)-\varrho(t,b)}_{\ell^1(\mathbb{R}^m)}\le\norm{a-b}_{\ell^1(\mathbb{R}^m)}.
	\end{equation*}
\end{prop}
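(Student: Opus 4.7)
My approach is to combine the lattice structure of the state space with monotonicity of the flow and conservation of the first integral $I(z):=\sum_{i=1}^m z_i$. The key observation is the pointwise identity
\[
\|x-y\|_{\ell^1(\mathbb{R}^m)}=I(x\vee y)-I(x\wedge y),
\]
where $\vee$ and $\wedge$ denote componentwise maximum and minimum; this follows from $|x_i-y_i|=\max(x_i,y_i)-\min(x_i,y_i)$ and summing over $i$.

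Given $a,b\in C$, I would set $a^{\vee}:=a\vee b$ and $a^{\wedge}:=a\wedge b$. Since $C=[0,c_1]\times\dots\times[0,c_m]$ is closed under componentwise maxima and minima, both $a^{\vee},a^{\wedge}\in C$, and they sandwich the initial data: $a^{\wedge}\le a\le a^{\vee}$ and $a^{\wedge}\le b\le a^{\vee}$. By monotonicity of the flow (the order-preserving property of Proposition~\ref{prop:monotone} in its global form), this ordering is preserved by $\varrho(t,\cdot)$, giving $\varrho(t,a^{\wedge})\le\varrho(t,a),\varrho(t,b)\le\varrho(t,a^{\vee})$, and hence
\[
\varrho(t,a^{\wedge})\le\varrho(t,a)\wedge\varrho(t,b)\qquad\text{and}\qquad\varrho(t,a)\vee\varrho(t,b)\le\varrho(t,a^{\vee})
\]
componentwise. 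Applying $I$, which is order-preserving as a sum of nonnegative coordinate functions and is conserved along trajectories, together with the opening identity yields
\[
\|\varrho(t,a)-\varrho(t,b)\|_{\ell^1}=I\bigl(\varrho(t,a)\vee\varrho(t,b)\bigr)-I\bigl(\varrho(t,a)\wedge\varrho(t,b)\bigr)\le I(a^{\vee})-I(a^{\wedge})=\|a-b\|_{\ell^1}.
\]

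The main obstacle is that Proposition~\ref{prop:monotone} is phrased restricted to pairs inside a single stoichiometric class $L_s$, whereas $a^{\vee}$ and $a^{\wedge}$ generically lie in strictly different level sets from $a$ and $b$, so the proposition cannot be cited verbatim. What I actually need is the weaker global statement $a\le b\Rightarrow\varrho(t,a)\le\varrho(t,b)$ on all of $C$; this does not require irreducibility and is a direct consequence of Kamke's theorem applied to the cooperative vector field whose Jacobian sign structure was already verified at the start of Section~5. I would therefore preface the proof with a one-line extension of Proposition~\ref{prop:monotone} to arbitrary comparable pairs in $C$. A minor secondary concern is that $a^{\vee}$ or $a^{\wedge}$ may lie on $\partial C$, but since the vector field extends continuously to $C$ and Lemma~\ref{lem:intC_remains} together with Remark~\ref{rem:instant} ensure well-posedness and forward invariance, monotonicity on the closed domain follows by standard continuity arguments, and the estimate above is unaffected.
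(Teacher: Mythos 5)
Your proof is correct, but it takes a genuinely different route from the paper. The paper's proof is a two-line contraction-theory argument: it computes the matrix measure induced by the $\ell^1$ norm, $\mu(A)=\max_i\big([A]_{ii}+\sum_{j\ne i}|[A]_{ji}|\big)$, observes that $\mu(J(n))=0$ because the columns of the Jacobian sum to zero with nonpositive diagonal and nonnegative off-diagonal entries, and invokes a standard result on nonexpansiveness of flows with nonpositive logarithmic norm. Your argument is instead the Crandall--Tartar-type lattice argument: order preservation of the flow plus conservation of $I$ forces $\ell^1$-nonexpansiveness via the identity $\|x-y\|_{\ell^1}=I(x\vee y)-I(x\wedge y)$. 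You correctly spot the one genuine gap in citing the paper's own machinery --- Proposition~\ref{prop:monotone} is stated only within a level set $L_s$, whereas $a\vee b$ and $a\wedge b$ generally leave the level sets of $a$ and $b$ --- and you correctly repair it with the Kamke--M\"uller comparison theorem for cooperative vector fields on the convex box $C$, which needs neither irreducibility nor the first integral; forward invariance of the closed box follows from (A3) (outflow terms vanish at $n_i=0$, inflow terms at $n_i=c_i$), so the boundary concern you raise is also handled. Each approach buys something: the paper's is shorter given the cited machinery and transfers immediately to any norm whose induced matrix measure of $J$ is nonpositive; yours is more self-contained, makes transparent that the estimate is exactly ``monotonicity plus mass conservation,'' and visibly requires no connectivity assumption at all, consistent with the paper's closing remark that Proposition~\ref{prop:contractive} extends to arbitrary compartmental topology.
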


In other words, using the usual $\ell^1(\mathbb{R}^m)$ norm, the distance of two trajectories at any given time cannot be larger than the distance of the initial values. In particular, if $b=e_{I(a)}$, then we find that the convergence to $e_{I(a)}$ is monotone. 

\begin{proof}
	By \cite[Chapter 2.2]{Vidyasagar1978} the induced matrix measure by the $\ell^1$ vector norm is
	\begin{equation}
		\mu(A)=\max_{i}\qty\Bigg{[A]_{ii}+\sum_{j\neq i}\qty\big|[A]_{ji}|}.
	\end{equation}
	Since $\mu\qty\big(J(n))=0$, the result \cite[Theorem 1.]{Russo2010} implies the assertion of the proposition.
\end{proof}

\begin{rem}
	It is straightforward to extend our persistence and stability results to systems with a weakly reversible compartmental graph, when the dynamics unfold into isolated subsystems having strongly connected compartmental graphs. Furthermore, some of the above results on the qualitative behaviour, for example the monotonicity in Proposition \ref{prop:monotone} and Proposition \ref{prop:contractive} can be extended to systems with arbitrary compartmental topology.
\end{rem}
%

%
\section{Conclusions}
The fundamental dynamical properties of a class of kinetic compartmental systems having finite capacities were studied in this paper. The transition of particles between compartments was described by a kinetic model. For persistence analysis, the Petri net representation of the CRN model was used. We showed that the Petri net of models having a strongly connected compartmental graph is also strongly connected. It was also shown that the dynamics of models with a strongly connected compartmental graph is persistent for a wide class of rate functions. This result is based on the characterization of siphons in the Petri net of the system and the corresponding conserved quantities. The persistence property was then used in the stability analysis of compartmental models. It was shown that for strongly connected compartmental models, a unique equilibrium point exists within each stoichiometric compatibility class, and this equilibrium is asymptotically stable within each compatibility class even if the initial conditions are on the boundary of the nonnegative orthant (except for the two trivial boundary equlibria). Further work will be focused on the control related application of our results.
\section*{Acknowledgements}
The authors acknowledge the support of the National Research, Development and Innovation Office (NKFIH) through grants no. 131545 and the Thematic Excellence Programme (TKP2020-NKA-11). The work of M. Vághy has been supported by the project ÚNKP-21-3-I-PPKE-60 of NKFIH.

\bibliographystyle{plain}
\bibliography{Compartmental_refs}

\begin{thebibliography}{10}

\bibitem{Anderson2011}
D.~F. Anderson.
\newblock A proof of the {Global Attractor Conjecture} in the single linkage
  class case.
\newblock {\em {SIAM} Journal on Applied Mathematics}, 71:1487--1508, 2011.
\newblock http://arxiv.org/abs/1101.0761,.

\bibitem{anderson2020classes}
D.~F. Anderson, J.~D. Brunner, G.~Craciun, and M.~D. Johnston.
\newblock On classes of reaction networks and their associated polynomial
  dynamical systems.
\newblock {\em Journal of Mathematical Chemistry}, 58(9):1895--1925, 2020.

\bibitem{anderson2021deficiency}
D.~F. Anderson and T.~D. Nguyen.
\newblock Deficiency zero for random reaction networks under a stochastic block
  model framework.
\newblock {\em Journal of Mathematical Chemistry}, 59(9):2063--2097, 2021.

\bibitem{anderson2013compartmental}
D.~H. Anderson.
\newblock {\em Compartmental modeling and tracer kinetics}, volume~50.
\newblock Springer Science \& Business Media, 2013.

\bibitem{Angeli2009}
D.~Angeli.
\newblock A tutorial on chemical network dynamics.
\newblock {\em European Journal of Control}, 15:398--406, 2009.

\bibitem{angeli2007petri}
D.~Angeli, P.~De~Leenheer, and E.~D. Sontag.
\newblock A petri net approach to the study of persistence in chemical reaction
  networks.
\newblock {\em Mathematical Biosciences}, 210(2):598--618, 2007.

\bibitem{angeli2011persistence}
D.~Angeli, P.~De Leenheer, and E.~D. Sontag.
\newblock Persistence results for chemical reaction networks with
  time-dependent kinetics and no global conservation laws.
\newblock {\em SIAM Journal on Applied Mathematics}, 71(1):128--146, 2011.

\bibitem{balaban1985applications}
A.~T. Balaban.
\newblock Applications of graph theory in chemistry.
\newblock {\em Journal of chemical information and computer sciences},
  25(3):334--343, 1985.

\bibitem{brauer2008compartmental}
F.~Brauer.
\newblock Compartmental models in epidemiology.
\newblock In {\em Mathematical epidemiology}, pages 19--79. Springer, 2008.

\bibitem{brown1980compartmental}
R.~F. Brown.
\newblock Compartmental system analysis: State of the art.
\newblock {\em IEEE Transactions on Biomedical Engineering}, (1):1--11, 1980.

\bibitem{Burch2019}
K.~J. Burch.
\newblock {\em Mathematical Physics in Theoretical Chemistry}, chapter Chemical
  applications of graph theory (Chapter 8), pages 261--294.
\newblock Elsevier, 2019.

\bibitem{Craciun2015}
G.~Craciun.
\newblock Toric differential inclusions and a proof of the global attractor
  conjecture.
\newblock arXiv:1501.02860 [math.DS], January 2015.

\bibitem{craciun2013persistence}
G.~Craciun, F.~Nazarov, and C.~Pantea.
\newblock Persistence and permanence of mass-action and power-law dynamical
  systems.
\newblock {\em SIAM Journal on Applied Mathematics}, 73(1):305--329, 2013.

\bibitem{Feinberg2019}
M.~Feinberg.
\newblock {\em Foundations of Chemical Reaction Network Theory}.
\newblock Springer, 2019.

\bibitem{garcia2012linear}
F.~Garcia-Sevilla, M.~Garcia-Moreno, M.~Molina-Alarcon, M.~Garcia-Meseguer,
  J.~M. Villalba, E.~Arribas, and R.~Varon.
\newblock Linear compartmental systems. i. kinetic analysis and derivation of
  their optimized symbolic equations.
\newblock {\em Journal of Mathematical Chemistry}, 50(6):1598--1624, 2012.

\bibitem{Haddad2010}
W.~M. Haddad, V.~Chellaboina, and Q.~Hui.
\newblock {\em Nonnegative and Compartmental Dynamical Systems}.
\newblock Princeton University Press, 2010.

\bibitem{han2015calculation}
X.~Han, Z.~Chen, Z.~Liu, and Q.~Zhang.
\newblock Calculation of siphons and minimal siphons in petri nets based on
  semi-tensor product of matrices.
\newblock {\em IEEE Transactions on Systems, Man, and Cybernetics: Systems},
  47(3):531--536, 2015.

\bibitem{hernandez2021positive}
B.~S. Hernandez and E.~R. Mendoza.
\newblock Positive equilibria of {Hill-type} kinetic systems.
\newblock {\em Journal of Mathematical Chemistry}, 59(3):840--870, 2021.

\bibitem{hofmeyr2020kinetic}
J.~S. Hofmeyr.
\newblock Kinetic modelling of compartmentalised reaction networks.
\newblock {\em BioSystems}, 197:104203, 2020.

\bibitem{jacquez1993qualitative}
J.~A. Jacquez and C.~P. Simon.
\newblock Qualitative theory of compartmental systems.
\newblock {\em {SIAM} Review}, 35(1):43--79, 1993.

\bibitem{kessels2019traffic}
F.~Kessels.
\newblock {\em Traffic Flow Modelling}.
\newblock Springer, 2019.

\bibitem{komorowski2012simple}
T.~Komorowski, C.~Landim, and S.~Olla.
\newblock The simple exclusion process.
\newblock In {\em Fluctuations in Markov Processes}, pages 155--197. Springer,
  2012.

\bibitem{liptak2021traffic}
G.~Lipt{\'a}k, M.~Pereira, B.~Kulcs{\'a}r, M.~Kov{\'a}cs, and
  G.~Szederk{\'e}nyi.
\newblock Traffic reaction model.
\newblock {\em arXiv preprint arXiv:2101.10190}, 2021.

\bibitem{margaliot2014entrainment}
M.~Margaliot, E.~Sontag, and T.~Tuller.
\newblock Entrainment to periodic initiation and transition rates in a
  computational model for gene translation.
\newblock {\em {PloS} One}, 9(5):e96039, 2014.

\bibitem{margaliot2012stability}
M.~Margaliot and T.~Tuller.
\newblock Stability analysis of the ribosome flow model.
\newblock {\em IEEE/ACM Transactions on Computational Biology and
  Bioinformatics}, 9(5):1545--1552, 2012.

\bibitem{Mierczynski1995}
J.~Mierczy{\'{n}}ski.
\newblock {Cooperative irreducible systems of ordinary differential equations
  with first integral}.
\newblock In {\em Proceedings of the Second Marrakesh International Conference
  on Differential Equations}, 1995.

\bibitem{muller2012generalized}
S.~Müller and G.~Regensburger.
\newblock Generalized mass action systems: Complex balancing equilibria and
  sign vectors of the stoichiometric and kinetic-order subspaces.
\newblock {\em SIAM Journal on Applied Mathematics}, 72(6):1926--1947, 2012.

\bibitem{pauleve2014dynamical}
L.~Paulev{\'e}, G.~Craciun, and H.~Koeppl.
\newblock Dynamical properties of discrete reaction networks.
\newblock {\em Journal of Mathematical Biology}, 69(1):55--72, 2014.

\bibitem{raveh2016model}
A.~Raveh, M.~Margaliot, E.~D. Sontag, and T.~Tuller.
\newblock A model for competition for ribosomes in the cell.
\newblock {\em Journal of The Royal Society Interface}, 13(116):20151062, 2016.

\bibitem{raveh2015ribosome}
A.~Raveh, Y~Zarai, M.~Margaliot, and T.~Tuller.
\newblock Ribosome flow model on a ring.
\newblock {\em IEEE/ACM Transactions on Computational Biology and
  Bioinformatics}, 12(6):1429--1439, 2015.

\bibitem{reuveni2011genome}
S.~Reuveni, I.~Meilijson, M.~Kupiec, E.~Ruppin, and T.~Tuller.
\newblock Genome-scale analysis of translation elongation with a ribosome flow
  model.
\newblock {\em {PLoS} Computational Biology}, 7(9):e1002127, 2011.

\bibitem{Russo2010}
G.~Russo, M.~di~Bernardo, and E.~D. Sontag.
\newblock {Global entrainment of transcriptional systems to periodic inputs}.
\newblock {\em PLoS Computational Biology}, 6(4), 2010.

\bibitem{Vidyasagar1978}
M.~Vidyasagar.
\newblock {\em {Nonlinear Systems Analysis}}.
\newblock Prentice Hall, 1978.

\bibitem{yamauchi1999time}
M.~Yamauchi and T.~Watanabe.
\newblock Time complexity analysis of the minimal siphon extraction problem of
  {P}etri nets.
\newblock {\em IEICE Transactions on Fundamentals of Electronics,
  Communications and Computer Sciences}, 82(11):2558--2565, 1999.

\bibitem{Acs2016}
B.~Ács, G.~Szederkényi, Zs. Tuza, and Z.~A. Tuza.
\newblock Computing all possible graph structures describing linearly conjugate
  realizations of kinetic systems.
\newblock {\em Computer Physics Communications}, 204:11--20, 2016.

\bibitem{Erdi1989}
P.~Érdi and J.~Tóth.
\newblock {\em Mathematical Models of Chemical Reactions. Theory and
  Applications of Deterministic and Stochastic Models}.
\newblock Manchester University Press, Princeton University Press, Manchester,
  Princeton, 1989.

\end{thebibliography}

\end{document}